\newtheorem{theorem}{Theorem}[section]
\newtheorem{lemma}[theorem]{Lemma}
\newtheorem{corollary}[theorem]{Corollary}
\theoremstyle{definition}
\newtheorem{definition}[theorem]{Definition}
\theoremstyle{remark}
\numberwithin{equation}{section}
\providecommand{\abs}[1]{\left\lvert{#1}\right\rvert}
\providecommand{\norm}[1]{\left\lVert{#1}\right\rVert}
\newcommand{\dual}[2]{\left\langle{#1}, \, {#2}\right\rangle}
\newcommand{\lr}[1]{\left({#1}\right)}
\newcommand{\lra}[1]{\left\lbrack{#1}\right\rbrack}
\newcommand{\lrb}[1]{\left\lbrace{#1}\right\rbrace}
\newcommand{\calka}[2][x]{\int_{\Omega} {#2} \, \mathrm{d}{#1}}
\newcommand{\calkawoddoz}[4][s]{\int_{#2}^{#3} {#4} \, \mathrm{d}{#1}}
\newcommand{\calkat}[2][t]{\int_{0}^{T} {#2} \, \mathrm{d}{#1}}
\newcommand{\dv}{\operatorname{div}}
\newcommand{\supp}{\mathop{\rm supp}}
\newcommand{\T}{{\ensuremath{\mathbb T}}}
\newcommand{\Z}{{\ensuremath{\mathbb Z}}}
\newcommand{\PP}{\ensuremath{\mathbb{P}}}
\def\ri{{\mathrm{i}}}
\begin{document}

\title{Robustness of Regularity for the $3$D Convective Brinkman--Forchheimer Equations}

\author{Karol W. Hajduk}
\address{Department of Mathematics and Statistics, Faculty of Science, Masaryk University,
Building 08, Kotl\'a{\v r}sk\'a 2, 611 37, Brno, Czech Republic}
\email{hajduk@math.muni.cz}
\thanks{KH was supported by an EPSRC Standard DTG EP/M506679/1 and by the Warwick Mathematics Institute.}

\author{James C. Robinson}
\address{Mathematics Institute, Zeeman Building, University of Warwick,
Coventry, CV4 7AL, United Kingdom}
\email{j.c.robinson@warwick.ac.uk}
\thanks{JR was supported in part by an EPSRC Leadership Fellowship EP/G007470/1.}

\author{Witold Sadowski}
\address{School of Mathematics, University of Bristol,
University Walk, Clifton, Bristol, BS8 1TW, United Kingdom}
\email{w.sadowski@bristol.ac.uk}

\subjclass[2000]{Primary 35Q35, 76S05; Secondary 76D03}

\date{January 29, 2021.}

\keywords{Convective Brinkman--Forchheimer, Tamed Navier--Stokes, Robustness of regularity, Local strong solutions, Weak-strong uniqueness, Subcritical exponent}

\begin{abstract}
We prove a robustness of regularity result for the $3$D convective Brinkman--Forchheimer equations
$$ \partial_tu -\mu\Delta u + (u \cdot \nabla)u + \nabla p + \alpha u + \beta\abs{u}^{r - 1}u = f, $$
for the range of the absorption exponent $r \in [1, 3]$ (for $r > 3$ there exist global-in-time regular solutions), i.e.\ we show that strong solutions of these equations remain strong under small enough changes of the initial condition and forcing function. We provide a smallness condition which is similar to the robustness conditions given for the $3$D incompressible Navier--Stokes equations by Chernyshenko et al.\ \cite{CCRT} and Dashti \& Robinson \cite{DashtiRobinson}.
\end{abstract}

\maketitle

\section{Introduction}

\makeatletter
    \renewcommand{\theequation}{{\thesection}.\@arabic\c@equation}
    \renewcommand{\thetheorem}{{\thesection}.\@arabic\c@theorem}
    \renewcommand{\thedefinition}{{\thesection}.\@arabic\c@definition}
    \renewcommand{\thecorollary}{{\thesection}.\@arabic\c@corollary}
    \renewcommand{\theproposition}{{\thesection}.\@arabic\c@proposition}
\makeatother

In this paper we consider strong solutions of the $3$D incompressible convective Brinkman--Forchheimer equations (CBF, see e.g.\ \cite{YouZhao})
\begin{equation} \label{cbfr}
\partial_tu -\mu\Delta u + (u \cdot \nabla)u + \nabla p + \alpha u + \beta\abs{u}^{r - 1}u = f, \quad \nabla \cdot u = 0,
\end{equation}
where $u(t, x) = (u_1, u_2, u_3)$ is the velocity field, the scalar function $p(t, x)$ is the pressure and $f(t, x) = (f_1, f_2, f_3)$ are given body forces acting within the fluid. The constant $\mu$ denotes the positive Brinkman coefficient (effective viscosity). The positive constants $\alpha$ and $\beta$ denote respectively the Darcy (permeability of porous medium) and Forchheimer (porosity of the material) coefficients. The `absorption exponent' $r$ can be greater or equal than $1$. The equations $(\ref{cbfr})$ can be seen also as the~Navier--Stokes equations (NSE) modified by an absorption term $\abs{u}^{r - 1}u$ (e.g.\ as in \cite{Oliveira}) or as the tamed Navier--Stokes equations (e.g.\ as in \cite{RocknerZhang}). Although the motivation of adding the~absorption term to the NSE is rather of mathematical nature, there are some relevant physical justifications and applications of this model (for more details see, for example, \cite{HR}, \cite{KalantarovZelik}, \cite{MarkowichTiti}, and references therein).

For simplicity, we often assume that $\mu, \alpha, \beta = 1$, but all of these coefficients can be taken as arbitrary nonnegative constants. In our arguments we omit the~linear term $\alpha u$ in $(\ref{cbfr})$ since its treatment does not cause any additional mathematical difficulties. Furthermore, we also neglect frequently the external forces ($f \equiv 0$) and consider the unforced CBF equations
\begin{equation} \label{cbfralphazero}
\partial_tu - \Delta u + (u \cdot \nabla)u + \nabla p + \abs{u}^{r - 1}u = 0.
\end{equation}

It was first established in \cite{KalantarovZelik} that, when the absorption exponent $r$ is greater than $3$, there exist global-in-time regular solutions of the CBF equations on bounded domains with smooth boundary (with the Dirichlet boundary condition). The same result was proved in \cite{HR} for the periodic case (with $r > 3$), and also for the critical exponent ($r = 3$) when the product of viscosity and porosity coefficients is not too small, i.e.\ $4\mu\beta \geq 1$.

In this paper we consider the equations $(\ref{cbfr})$ on a three-dimensional torus $\mathbb{T}^3$ with periodic boundary conditions. In this setting it is often convenient to assume zero mean-value constraint for the functions (i.e.\ $\int{u(t, x)\, \mathrm{d}x} = 0$). However, we cannot do that for the CBF equations $(\ref{cbfr})$ because the absorption term $\abs{u}^{r - 1}u$ does not preserve this property. Therefore, we cannot use the usual Poincar\'e inequality $\norm{u}_{L^2} \le c\norm{\nabla u}_{L^2}$ and we have to control the full $H^1$-norm instead.

We define the Sobolev spaces $H^s(\T^3)$ for $s \ge 0$ by the Fourier expansion
$$ H^s(\T^3) := \lrb{u \in L^2(\T^3) : u(x) = \sum_{k \in \Z^3}{\hat{u}_k}e^{\ri k \cdot x}, \quad \hat{u}_k = \overline{\hat{u}_{-k}}, \quad \norm{u}_{H^s(\T^3)} < \infty }, $$
where
$$ \norm{u}_{H^s(\T^3)}^2 := \abs{\T^3}\sum_{k \in \Z^3}{ (1 + \abs{k}^{2s})\abs{\hat{u}_k}^2 } $$
and
$$ \hat{u}_k := \abs{\T^3}^{-1}\int_{\T^3}{u(x)e^{-\ri k \cdot x}\, \mathrm{d}x}. $$

We will also use the following function spaces:
\begin{align*}
\mathcal{D}_{\sigma} &:= \lrb{\varphi \in [C^\infty(\T^3)]^3 : \supp\varphi \mbox{ is compact}, \nabla \cdot \varphi = 0}, \\
L^p_{\sigma} &:= \mbox{closure of } \mathcal{D}_{\sigma} \mbox{ in the } L^p\mbox{-norm} \quad \mbox{for} \quad p \geq 1, \\
V^s &:= \mbox{closure of } \mathcal{D}_{\sigma} \mbox{ in the } H^s\mbox{-norm} \quad \mbox{for} \quad s \geq 1.
\end{align*}
We denote the Hilbert space $L^2_{\sigma}$, which is of great importance in the theory of fluid mechanics, by $H$. This space is endowed with the inner product induced by $L^2(\T^3)$. We denote it by $\dual{\cdot}{\cdot}$ and the~corresponding norm is denoted by $\norm{\cdot}$. When we consider the CBF equations with forcing, we assume that $f$ belongs at least to the space $L^1(0, T; H)$.

The idea of `robustness of regularity' was first introduced by Constantin \cite{Constantin}, where it was shown that, under some conditions, regular solutions of the Euler equations are also regular solutions of the Navier--Stokes equations with small viscosity. This idea was further developed by Chernyshenko et al.\ \cite{CCRT}, Dashti \& Robinson \cite{DashtiRobinson} and by Marin-Rubio et al.\ \cite{MRRS} (for bounded sets of initial data) purely for the Navier--Stokes equations. It was also successfully applied by Bl\"omker et al.\ \cite{BlomkerNoldeRobinson} for a $1$D surface growth model that has striking similarities to the NSE. Similar ideas of propagation of regularity for the Navier--Stokes equations can be found in the book by Chemin et al.\ \cite{CheminGeo}. In the~ present paper we show that the robustness of regularity of strong solutions holds for the convective Brinkman--Forchheimer equations (with $r \in [1, 3]$) as well.

We note that a similar result to our `robustness of regularity' can be achieved via the Implicit Function Theorem. However, this method does not provide the explicit bounds for the closeness of considered solutions, which our method does. These bounds can potentially be used for numerical verification of regularity for the CBF equations, as it was done for the Navier--Stokes equations in \cite{CCRT, DashtiRobinson} and \cite{MRRS}. Similar ideas have been used in the context of singularly perturbed, damped wave equation with supercritical nonlinearities in \cite{Zelikrobustness} (cf.\ Lemma $2.1$ and Proposition $4.1$), to show regularity of the global attractor. In more recent work, \cite{Zeliketalrobustness}, these type of methods have been used to show a convergence of the family of attractors of the relaxed hyperbolic $2$D Navier--Stokes equations in bounded domains, to the attractor of the non-relaxed problem.

The paper is organised as follows: in Section $\ref{prelim}$ we introduce some technical tools which will be crucial in dealing with the nonlinear term $\abs{u}^{r - 1}u$. Section $\ref{local}$ is devoted to the local existence of strong solutions which is needed in the proof of the robustness of regularity result. In Section $\ref{uniqueness}$ we provide general properties of strong solutions of the CBF equations with the absorption exponent $r$ in the range $[1, 3]$. We also establish the uniqueness of strong solutions in the larger class of weak solutions satisfying the energy inequality, i.e.\ a `weak-strong uniqueness' property. In the last sections, we prove the main result of the~paper (Section $\ref{s:robustness}$) and discuss its possible applications to the convective Brinkman--Forchheimer equations (Section $\ref{s:conclusion}$).

In our estimates, we frequently use a constant $c >0$, whose value can change from line to line.

\section{Preliminaries} \label{prelim}

\makeatletter
    \renewcommand{\theequation}{{\thesection}.\@arabic\c@equation}
    \renewcommand{\thetheorem}{{\thesection}.\@arabic\c@theorem}
    \renewcommand{\thedefinition}{{\thesection}.\@arabic\c@definition}
    \renewcommand{\thecorollary}{{\thesection}.\@arabic\c@corollary}
    \renewcommand{\theproposition}{{\thesection}.\@arabic\c@proposition}
\makeatother

For notational convenience we denote the terms connected with the additional nonlinearity in the convective Brinkman--Forchheimer equations by $C_r$. For $r > 0$ and for all functions $u, v \in L^{r + 1}_{\sigma}$ we define
$$ C_r(u, v) := \mathbb{P}\lr{\abs{u}^{r - 1}v}, $$
where $\mathbb{P} : L^p \to L^p_{\sigma}$ is the Leray projector in $L^p$ (see e.g.\ \cite{FujiwaraMorimoto} for details); additionally we define
$$ C_r(u) := C_r(u, u). $$

We have the following crucial properties of the nonlinearity $C_r$.

\begin{lemma}
\label{monotone}
For every $r \ge 1$ and for all functions $u, v \in L^{r + 1}_{\sigma}$ we have a lower bound
\begin{equation} \label{lowerbound}
\dual{C_r(u) - C_r(v)}{u - v} = \dual{\abs{u}^{r - 1}u - \abs{v}^{r - 1}v}{u - v} \geq c\norm{u - v}_{r + 1}^{r + 1},
\end{equation}
where $c$ is a positive constant depending only on $r$, and $\dual{\cdot}{\cdot}$ is the inner product in $L^2$.
\end{lemma}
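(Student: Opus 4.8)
The plan is to reduce the claim to a pointwise inequality in $\R^3$ and then integrate. Observe first that since $\mathbb{P}$ is an orthogonal projection in $L^2$ and $u - v \in L^2_\sigma$, we have $\dual{C_r(u) - C_r(v)}{u-v} = \dual{\mathbb{P}(\abs{u}^{r-1}u - \abs{v}^{r-1}v)}{u-v} = \dual{\abs{u}^{r-1}u - \abs{v}^{r-1}v}{u-v}$, which justifies the stated identity and removes the Leray projector from consideration. It then suffices to prove the pointwise bound
\begin{equation} \label{pointwise}
\lr{\abs{a}^{r-1}a - \abs{b}^{r-1}b} \cdot (a - b) \geq c_r \abs{a - b}^{r+1} \quad \text{for all } a, b \in \R^3,
\end{equation}
with $c_r > 0$ depending only on $r \geq 1$; integrating \eqref{pointwise} over $\T^3$ with $a = u(x)$, $b = v(x)$ gives exactly \eqref{lowerbound}.

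To establish \eqref{pointwise} I would use the elementary identity, valid for the map $\xi \mapsto \abs{\xi}^{r-1}\xi$ on $\R^3$,
$$ \lr{\abs{a}^{r-1}a - \abs{b}^{r-1}b} \cdot (a-b) = \int_0^1 \frac{\mathrm{d}}{\mathrm{d}s}\lr{\abs{b + s(a-b)}^{r-1}(b+s(a-b))} \cdot (a-b) \, \mathrm{d}s, $$
and compute the derivative: writing $\xi(s) = b + s(a-b)$ and $h = a - b$, the integrand equals $\abs{\xi}^{r-1}\abs{h}^2 + (r-1)\abs{\xi}^{r-3}(\xi \cdot h)^2 \geq \abs{\xi}^{r-1}\abs{h}^2$ since $r \geq 1$. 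Thus the left-hand side of \eqref{pointwise} is bounded below by $\abs{a-b}^2 \int_0^1 \abs{b + s(a-b)}^{r-1} \, \mathrm{d}s$, and the problem is reduced to showing $\int_0^1 \abs{b + s(a-b)}^{r-1}\,\mathrm{d}s \geq c_r \abs{a-b}^{r-1}$. For $r = 1$ this is trivial; for $r > 1$ one splits according to whether $\abs{b} \leq \tfrac14\abs{a-b}$ or not — in the first case $\abs{b + s(a-b)} \geq \tfrac14\abs{a-b}$ for $s \in [\tfrac12, 1]$, while in the second case $\abs{b} \geq \tfrac14\abs{a-b}$ already and one uses the segment near $s = 0$ — giving the bound with an explicit constant.

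Alternatively, and perhaps more cleanly, one may appeal to the known convexity-type inequality for the $r+1$ power: the function $\Phi(\xi) = \tfrac{1}{r+1}\abs{\xi}^{r+1}$ is convex on $\R^3$ with $\nabla\Phi(\xi) = \abs{\xi}^{r-1}\xi$, so the left side of \eqref{pointwise} is $(\nabla\Phi(a) - \nabla\Phi(b))\cdot(a-b)$, and the desired lower bound by $c_r\abs{a-b}^{r+1}$ is the standard strong-monotonicity estimate for the $p$-Laplacian-type operator with $p = r+1 \geq 2$, which is classical (see, e.g., the references on monotone operators). I would state it via the first approach since it is self-contained.

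The only genuine obstacle is the case analysis (or the monotonicity estimate) showing the segment integral $\int_0^1 \abs{b+s(a-b)}^{r-1}\,\mathrm{d}s$ cannot be too small relative to $\abs{a-b}^{r-1}$ — equivalently, that the segment from $b$ to $a$ cannot spend essentially all its length near the origin; this is where the uniform positivity of $c_r$ comes from, and it fails (constant degenerates) only as $r \to \infty$, which is outside our range. Everything else — the removal of $\mathbb{P}$, the fundamental-theorem-of-calculus identity, and discarding the nonnegative $(r-1)\abs{\xi}^{r-3}(\xi\cdot h)^2$ term — is routine.
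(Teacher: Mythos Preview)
Your proposal is correct and follows essentially the same route as the paper: remove the Leray projector by self-adjointness, express the difference via the fundamental theorem of calculus along the segment joining the two vectors, drop the nonnegative $(r-1)\abs{\xi}^{r-3}(\xi\cdot h)^2$ term, and reduce to the segment-integral lower bound $\int_0^1 \abs{\xi(s)}^{r-1}\,\mathrm{d}s \geq c_r\abs{a-b}^{r-1}$ via a case split comparing an endpoint with $\abs{a-b}$. The paper splits on whether $\abs{u} \geq \abs{u-v}$ and handles the second case through a Jensen-type manipulation and an explicit computation of $\int_0^1\abs{su+(1-s)v}^2\,\mathrm{d}s$, whereas your threshold at $\tfrac14\abs{a-b}$ yields a slightly more elementary argument; both are valid.
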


It immediately follows from $(\ref{lowerbound})$ that for $r \ge 1$ the nonlinearity $C_r$ is monotone in the sense that
\begin{equation} \label{monotonicity}
\dual{C_r(u) - C_r(v)}{u - v} \ge 0
\end{equation}
for all $u, v \in L^{r + 1}_{\sigma}(\T^3)$. One can show $(\ref{monotonicity})$ independently even for $r > 0$ by direct computation and using only Young's inequality.

Lemma $\ref{monotone}$ is a consequence of properties of vectors $\abs{u}^{r - 1}u$ in $\mathbb{R}^n$ $(n \ge 1)$.
The proof of the lower bound $(\ref{lowerbound})$ is taken from \cite{DiBenedetto} with some minor changes.
\begin{proof}
For all $u, v \in \mathbb{R}^n$ we observe that
\begin{align} \nonumber
\lr{\abs{u}^{r - 1}u - \abs{v}^{r - 1}v} = {\int_0^1{\frac{\mathrm{d}}{\mathrm{d}s}\lr{\abs{su + (1 - s)v}^{r - 1}(su + (1 - s)v)}\, \mathrm{d}s}}
\end{align}
and hence
\begin{align} \nonumber
\lr{\abs{u}^{r - 1}u - \abs{v}^{r - 1}v} &\cdot w = {\int_0^1{ {\abs{su + (1 - s)v}^{r - 1}}\abs{w}^2\, \mathrm{d}s} } \\ \nonumber
&+ (r - 1)\int_0^1{ \abs{su + (1 - s)v}^{r - 3}\lr{[su + (1 - s)v]\cdot w}^2 \, \mathrm{d}s},
\end{align}
where $w := u - v$. Therefore, we obtain for $r \ge 1$
$$ \lr{\abs{u}^{r - 1}u - \abs{v}^{r - 1}v} \cdot w \ge \abs{w}^2{\int_0^1{ {\abs{su + (1 - s)v}^{r - 1}}\, \mathrm{d}s} }. $$

If $\abs{u} \ge \abs{v - u}$, we have
$$ \abs{su + (1 - s)v} \ge \abs{\abs{u} - (1 - s)\abs{w}} \ge s\abs{w} $$
and we can conclude that
$$ \lr{\abs{u}^{r - 1}u - \abs{v}^{r - 1}v} \cdot w \ge \frac{1}{r}\abs{w}^{r + 1}. $$

On the other hand, if $\abs{u} < \abs{v - u}$, we have
\begin{align} \nonumber
\abs{w}^2{\int_0^1{ {\abs{su + (1 - s)v}^{r - 1}}\, \mathrm{d}s} } &\ge \abs{w}^2{\int_0^1{ \frac{ (\abs{su + (1 - s)v}^2)^{(r + 1)/2} }{(2 - s)^2\abs{w}^2}\, \mathrm{d}s} } \\ \nonumber
&\ge \frac{1}{4}\lr{\int_0^1{ {\abs{su + (1 - s)v}^2}\, \mathrm{d}s} }^{(r + 1)/2} \\ \nonumber
&= \frac{1}{4\cdot3^{(r + 1)/2}}\lr{\abs{u}^2 + u \cdot v + \abs{v}^2}^{(r + 1)/2} \\ \nonumber
&\ge c\abs{w}^{r + 1}.
\end{align}

Finally, we observe an equality for $u, v \in L^{r + 1}_{\sigma}(\T^3)$
\begin{align} \nonumber
\dual{C_r(u) - C_r(v)}{w} &= \dual{\mathbb{P}\lr{\abs{u}^{r - 1}u - \abs{v}^{r - 1}v}}{w} \\ \nonumber
&= \dual{\abs{u}^{r - 1}u - \abs{v}^{r - 1}v}{w} \\ \nonumber
&= \int_{\T^3}{\lr{\abs{u}^{r - 1}u - \abs{v}^{r - 1}v}\cdot w \, \mathrm{d}x},
\end{align}
which ends the proof of the lemma due to monotonicity of integral and the above vector estimates.
\end{proof}

In what follows it will be essential to bound the difference
\begin{align} \label{difference}
\abs{u}^{r - 1}u - \abs{v}^{r - 1}v
\end{align}
in terms of only $u$ and $w$, where $w := u - v$.

\begin{lemma} \label{cbfdifference}
Let $u, v \in \mathbb{R}^n$. Then for $r \geq 1$
\begin{align} \nonumber
\abs{\abs{u}^{r - 1}u - \abs{v}^{r - 1}v} \leq (2^{r - 2}r)\lr{\abs{u}^{r - 1}\abs{w} + \abs{w}^{r}}.
\end{align}
\end{lemma}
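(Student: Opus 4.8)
The plan is to use the same integral representation that appeared in the proof of Lemma \ref{monotone}, namely writing $\abs{u}^{r-1}u - \abs{v}^{r-1}v$ as the integral over $s \in [0,1]$ of the derivative of $s \mapsto \abs{su+(1-s)v}^{r-1}(su+(1-s)v)$. Differentiating gives two terms: one of the form $\abs{su+(1-s)v}^{r-1}w$ and one of the form $(r-1)\abs{su+(1-s)v}^{r-3}\big([su+(1-s)v]\cdot w\big)(su+(1-s)v)$. Taking absolute values inside the integral, the second term is bounded (by Cauchy--Schwarz, $\abs{[su+(1-s)v]\cdot w} \le \abs{su+(1-s)v}\abs{w}$) by $(r-1)\abs{su+(1-s)v}^{r-1}\abs{w}$, so altogether
\begin{equation} \nonumber
\abs{\abs{u}^{r - 1}u - \abs{v}^{r - 1}v} \le r \int_0^1 \abs{su+(1-s)v}^{r-1}\abs{w}\,\mathrm{d}s.
\end{equation}

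Next I would express $v = u - w$, so $su + (1-s)v = u - (1-s)w$, and estimate $\abs{u-(1-s)w} \le \abs{u} + \abs{w}$ pointwise in $s$; hence the integral is bounded by $r\big(\abs{u}+\abs{w}\big)^{r-1}\abs{w}$. It then remains to convert $\big(\abs{u}+\abs{w}\big)^{r-1}$ into a sum $\abs{u}^{r-1} + \abs{w}^{r-1}$ up to a constant. For $r \ge 1$ the elementary convexity inequality $(a+b)^{p} \le 2^{p-1}(a^p + b^p)$ with $p = r-1 \ge 0$ — interpreting $2^{p-1}$ as $\tfrac12$ when $0 \le p \le 1$ and using $(a+b)^p \le 2^{p-1}(a^p+b^p)$ for $p\ge1$ — gives $\big(\abs{u}+\abs{w}\big)^{r-1} \le 2^{r-2}\big(\abs{u}^{r-1}+\abs{w}^{r-1}\big)$, and multiplying through by $r\abs{w}$ yields the claimed bound $\abs{\abs{u}^{r - 1}u - \abs{v}^{r - 1}v} \le (2^{r-2}r)\big(\abs{u}^{r-1}\abs{w} + \abs{w}^r\big)$.

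The only mildly delicate points are bookkeeping rather than substantive: one must handle the case $r = 1$ (where the left-hand side is just $\abs{w}$ and both inequalities above are trivial or degenerate, the factor $2^{r-2} = \tfrac12$ still giving a valid bound) and make sure the $\abs{su+(1-s)v}^{r-3}$ singularity in the second term of the derivative is harmless — it is, since after Cauchy--Schwarz it recombines into $\abs{su+(1-s)v}^{r-1}$, which is locally integrable for $r \ge 1$, and on the set where $su+(1-s)v = 0$ the whole integrand vanishes. So I do not expect a genuine obstacle here; the main thing is simply to chain the integral representation, the triangle inequality, and the convexity inequality in the right order with the constants tracked.
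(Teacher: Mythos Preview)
Your approach is essentially the same as the paper's: the paper defines $\varphi(\lambda) = \abs{u - \lambda w}^{r-1}(u - \lambda w)$, bounds $\abs{\varphi(1) - \varphi(0)}$ by $\max_{\lambda}\abs{\varphi'(\lambda)} \le r\abs{w}(\abs{u}+\abs{w})^{r-1}$ via the mean value theorem, and then applies the same power inequality $(a+b)^{r-1} \le 2^{r-2}(a^{r-1}+b^{r-1})$ that you use. The only cosmetic differences are that the paper phrases the first step as a mean-value bound rather than integrating the derivative, and writes $\varphi'(\lambda) = -r\abs{u-\lambda w}^{r-1}w$ directly instead of splitting into two terms and applying Cauchy--Schwarz as you (more carefully) do.
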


\begin{proof}
First, we consider  the following function of one real variable $\varphi : \mathbb{R} \to \mathbb{R}^n$
\begin{equation} \nonumber
\varphi(\lambda) := \abs{u -\lambda w}^{r - 1}(u - \lambda w),
\end{equation}
for $\lambda \in \lra{0, 1}$.

It is easy to see that
$$ \varphi(1) -\varphi(0) = -\lr{\abs{u}^{r - 1}u - \abs{v}^{r - 1}v}, $$
and that the derivative of $\varphi$ equals
\begin{equation} \nonumber
\varphi'(\lambda) = -r\abs{u - \lambda w}^{r - 1}w.
\end{equation}

By the mean value theorem we estimate the difference $\lr{\ref{difference}}$
\begin{align} \nonumber
\abs{\abs{u}^{r - 1}u - \abs{v}^{r - 1}v} &= \abs{\varphi(1) -\varphi(0)} \leq \max_{\lambda \in \lra{0, 1}} \abs{\varphi'(\lambda)} \\ \nonumber
&= \max_{\lambda \in \lra{0, 1}} \abs{-r\abs{u - \lambda w}^{r - 1}w} \leq r\abs{w}\max_{\lambda \in \lra{0, 1}} \abs{u - \lambda w}^{r - 1} \\ \nonumber
&\leq r\abs{w}\lr{\abs{u} + \abs{w}}^{r - 1} \leq r\abs{w}\lra{2^{r - 2}\lr{\abs{u}^{r - 1} + \abs{w}^{r -1}}} \\ \nonumber
&\leq (2^{r - 2}r)\lr{\abs{u}^{r - 1}\abs{w} + \abs{w}^{r}}.
\end{align}

We used here the following simple fact
$$ f(x) := \frac{(1 + x)^s}{1 + x^s} \leq f(1) = 2^{s - 1} $$
which holds for all $s \ge 0$.
\end{proof}

We will also make use of the following lemma, whose proof consists of integration by parts and differentiation of the absolute value function (see \cite{RobinsonSadowski} for the proof in the periodic case or \cite{DaVeiga} in the whole space).
\begin{lemma} \label{witeklemma}
For every ${r \geq 1}$, if $u \in H^2(\Omega)$, where $\Omega$ is either the whole space $\mathbb{R}^3$ or the three-dimensional torus $\mathbb{T}^3$, then
\begin{equation} \nonumber
\int_{\Omega} -\Delta u \cdot \abs{u}^{r - 1}u \, \mathrm{d}x \geq \int_{\Omega} \abs{\nabla u}^{2}\abs{u}^{r -1} \, \mathrm{d}x.
\end{equation}
\end{lemma}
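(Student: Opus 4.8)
The plan is to integrate by parts, then apply the chain rule, reducing the claimed inequality to the observation that the leftover term is a nonnegative multiple of a sum of squares precisely when $r\geq 1$. First note that $u\in H^2(\Omega)\hookrightarrow L^\infty(\Omega)$ while $\nabla u,\Delta u\in L^2(\Omega)$, and (on $\mathbb{R}^3$) $u\in L^{2r}(\Omega)$ by interpolation between $L^2$ and $L^\infty$; hence both integrands in the statement are absolutely integrable, so everything is well defined. Formally, writing $u=(u_1,u_2,u_3)$ and integrating by parts (the boundary terms vanish by periodicity, respectively by decay at infinity),
\begin{align*}
\int_{\Omega} -\Delta u\cdot\abs{u}^{r-1}u\,\mathrm{d}x
&= \int_{\Omega}\sum_{j,k}\partial_k u_j\,\partial_k\bigl(\abs{u}^{r-1}u_j\bigr)\,\mathrm{d}x \\
&= \int_{\Omega}\abs{u}^{r-1}\abs{\nabla u}^2\,\mathrm{d}x + (r-1)\int_{\Omega}\abs{u}^{r-3}\sum_k\bigl(u\cdot\partial_k u\bigr)^2\,\mathrm{d}x,
\end{align*}
where I used $\partial_k(\abs{u}^{r-1})=(r-1)\abs{u}^{r-3}(u\cdot\partial_k u)$; since $r\geq 1$ the last integral is nonnegative and the claim follows.

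The catch is that this computation is only formal when $1\leq r<3$: the map $s\mapsto\abs{s}^{r-1}s$ on $\mathbb{R}^3$ is merely continuous (not $C^1$) at the origin, and the factor $\abs{u}^{r-3}$ may blow up on the zero set of $u$, so the chain rule is not directly available. I would make the argument rigorous by regularising: replace $\abs{u}$ by $(\abs{u}^2+\varepsilon^2)^{1/2}$ and test $-\Delta u$ against $\varphi_\varepsilon(u):=(\abs{u}^2+\varepsilon^2)^{(r-1)/2}u$. For fixed $\varepsilon>0$ the map $s\mapsto(\abs{s}^2+\varepsilon^2)^{(r-1)/2}s$ is smooth with derivative bounded on the (bounded) range of $u$, hence $\varphi_\varepsilon(u)\in H^1(\Omega)\cap L^\infty(\Omega)$, the identity $\int_{\Omega}-\Delta u\cdot\varphi_\varepsilon(u)=\int_{\Omega}\nabla u:\nabla\varphi_\varepsilon(u)$ is a legitimate integration by parts, and the chain rule gives
\begin{align*}
\int_{\Omega}-\Delta u\cdot\varphi_\varepsilon(u)\,\mathrm{d}x
&= \int_{\Omega}(\abs{u}^2+\varepsilon^2)^{(r-1)/2}\abs{\nabla u}^2\,\mathrm{d}x \\
&\quad + (r-1)\int_{\Omega}(\abs{u}^2+\varepsilon^2)^{(r-3)/2}\sum_k(u\cdot\partial_k u)^2\,\mathrm{d}x \\
&\geq \int_{\Omega}(\abs{u}^2+\varepsilon^2)^{(r-1)/2}\abs{\nabla u}^2\,\mathrm{d}x,
\end{align*}
the final step because $r-1\geq 0$ and $(\abs{u}^2+\varepsilon^2)^{(r-3)/2}>0$.

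It then remains to let $\varepsilon\to 0^+$. On the left, $\varphi_\varepsilon(u)\to\abs{u}^{r-1}u$ pointwise, and for $\varepsilon\leq 1$ one has $\abs{\varphi_\varepsilon(u)}\leq(\norm{u}_{L^\infty}^2+1)^{(r-1)/2}\abs{u}$, so the integrand is dominated by $(\norm{u}_{L^\infty}^2+1)^{(r-1)/2}\abs{\Delta u}\,\abs{u}\in L^1(\Omega)$ (Cauchy--Schwarz, using $\Delta u,u\in L^2$); dominated convergence gives convergence of the left-hand side to $\int_{\Omega}-\Delta u\cdot\abs{u}^{r-1}u\,\mathrm{d}x$. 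On the right, $(\abs{u}^2+\varepsilon^2)^{(r-1)/2}\abs{\nabla u}^2\to\abs{u}^{r-1}\abs{\nabla u}^2$ pointwise and is dominated for $\varepsilon\leq 1$ by $(\norm{u}_{L^\infty}^2+1)^{(r-1)/2}\abs{\nabla u}^2\in L^1(\Omega)$, so dominated convergence applies here too, yielding the stated inequality. I expect the only genuine obstacle to be this careful treatment of the zero set of $u$ — the $\varepsilon$-regularisation and the passage to the limit — the algebraic identity itself being routine; on the torus one could alternatively first mollify $u$, run the computation for the smooth approximations $u_\delta$, and pass to the limit using $u_\delta\to u$ in $H^2(\mathbb{T}^3)$.
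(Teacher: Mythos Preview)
Your argument is correct and follows exactly the route the paper indicates: the paper does not prove the lemma in full but states the identity
\[
\int_{\Omega} -\Delta u \cdot \abs{u}^{r - 1}u \, \mathrm{d}x = \int_{\Omega} \abs{\nabla u}^{2}\abs{u}^{r - 1} \, \mathrm{d}x + \frac{(r - 1)}{4}\int_\Omega \abs{u}^{r - 3}\abs{\nabla{\abs{u}^{2}}}^2\,\mathrm{d}x,
\]
which is precisely your formal computation (note $\abs{\nabla\abs{u}^2}^2=4\sum_k(u\cdot\partial_k u)^2$), and then refers to \cite{RobinsonSadowski} and \cite{DaVeiga} for the details. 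Your $\varepsilon$-regularisation and dominated-convergence passage supply exactly those details, so your write-up is in fact more complete than what appears in the paper itself.
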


Explicitly, the left-hand side of the above equals (integrating by parts)
\begin{align} \nonumber
\int_{\Omega} -\Delta u \cdot \abs{u}^{r - 1}u \, \mathrm{d}x = \int_{\Omega} \abs{\nabla u}^{2}\abs{u}^{r - 1} \, \mathrm{d}x + \frac{(r - 1)}{4}\calka{\abs{u}^{r - 3}\abs{\nabla{\abs{u}^{2}}}^2}.
\end{align}

In particular, by Lemma \ref{witeklemma}, we can write for the absorption term $\abs{u}^{r - 1}u$ (for $r \geq 1$ and for a divergence-free function $u \in V^2$)
\begin{equation} \label{cbfnonlineargen}
\calka{\abs{\nabla u}^{2}\abs{u}^{r -1}} \leq \dual{A u}{C_r(u)} \leq r\calka{\abs{\nabla u}^{2}\abs{u}^{r -1}},
\end{equation}
where $A := -\mathbb{P}\Delta$ is the familiar Stokes operator. We recall the well know fact that the operators $\mathbb{P}$ and $\Delta$ commute on the domains $\mathbb{T}^3$ and $\mathbb{R}^3$ but not neccessarily on an open, bounded domain $\Omega \subset \mathbb{R}^3$ (see e.g.\ \cite{RRSbook} for examples). Therefore, we can freely use $\lr{\ref{cbfnonlineargen}}$ throughout this paper.

In the proof of main result of this paper (Theorem $\ref{crucialestimate}$) it will be crucial to control the $L^6$-norm of the gradient of a function $u$ by the $L^2$-norm of $Au$.
\begin{lemma} \label{crucialestimate}
Let $u \in D(A)$ on the torus $\T^3$. Then there exists a constant $c > 0$ independent of $u$ such that
$$ \norm{\nabla u}_{L^6(\T^3)} \le c\norm{Au}. $$
\end{lemma}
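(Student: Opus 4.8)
The plan is to reduce the claimed bound $\norm{\nabla u}_{L^6} \le c\norm{Au}$ to a combination of the Sobolev embedding $H^1(\T^3) \hookrightarrow L^6(\T^3)$ and elliptic regularity for the Stokes operator on the torus. First I would note that, by the Sobolev embedding in three dimensions, $\norm{\nabla u}_{L^6(\T^3)} \le c\norm{\nabla u}_{H^1(\T^3)} = c\norm{u}_{H^2(\T^3)}$, where the last equality is really just the statement that the full $H^2$ norm is equivalent to $\norm{u} + \norm{D^2 u}_{L^2}$, which can be seen directly from the Fourier-series definition of $H^s(\T^3)$ given in the introduction. So it suffices to prove $\norm{u}_{H^2(\T^3)} \le c\norm{Au}$ for $u \in D(A)$.

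For the latter I would work on the Fourier side. Since we are on the torus and $u$ is divergence-free, $A = -\mathbb{P}\Delta = -\Delta$ on $D(A)$ (the projector $\mathbb{P}$ and $\Delta$ commute, as recalled in the excerpt just before the lemma, and $\mathbb{P}u = u$), so $\widehat{(Au)}_k = \abs{k}^2 \hat u_k$. Hence
\begin{equation} \nonumber
\norm{Au}^2 = \abs{\T^3}\sum_{k \in \Z^3} \abs{k}^4 \abs{\hat u_k}^2,
\end{equation}
while
\begin{equation} \nonumber
\norm{u}_{H^2(\T^3)}^2 = \abs{\T^3}\sum_{k \in \Z^3} (1 + \abs{k}^4)\abs{\hat u_k}^2.
\end{equation}
Comparing termwise, the only obstruction is the very low frequency $k = 0$: for $\abs{k} \ge 1$ we have $1 + \abs{k}^4 \le 2\abs{k}^4$, so those terms are controlled by $2\norm{Au}^2$. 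The zero mode contributes $\abs{\T^3}\abs{\hat u_0}^2 = \abs{\int_{\T^3} u\,\mathrm{d}x}^2 / \abs{\T^3}$, which is \emph{not} in general dominated by $\norm{Au}$ — indeed constants lie in the kernel of $A$. This is exactly the point the introduction flagged: one cannot assume zero mean for CBF solutions.

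The resolution, and the step I expect to be the only real subtlety, is that the $k=0$ mode of a \emph{divergence-free} function in $L^2_\sigma(\T^3)$ is already controlled: one shows $\hat u_0 = 0$ for every $u \in L^2_\sigma = H$. This follows because for any $\varphi \in \mathcal{D}_\sigma$ with $\nabla\cdot\varphi = 0$ one has $\int_{\T^3}\varphi\,\mathrm{d}x = \hat\varphi_0$, and testing the divergence-free condition against the linear coordinate functions (or simply reading off $\widehat{(\nabla\cdot\varphi)}_k = \ri k\cdot\hat\varphi_k$ and noting there is no constraint at $k=0$) — wait, this needs care: divergence-free does \emph{not} force the zero mode to vanish in general. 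The correct statement to invoke is rather that $\norm{\nabla u}_{L^6}$ only sees derivatives, so in fact I should bypass the zero mode entirely: $\norm{\nabla u}_{L^6(\T^3)} \le c\norm{\nabla u}_{H^1} \le c(\norm{\nabla u} + \norm{D^2 u})$, and on the Fourier side both $\norm{\nabla u}^2 = \abs{\T^3}\sum_k \abs{k}^2\abs{\hat u_k}^2$ and $\norm{D^2 u}^2 = \abs{\T^3}\sum_k \abs{k}^4\abs{\hat u_k}^2$ involve no $k=0$ term, so each is bounded by a constant times $\norm{Au}^2 = \abs{\T^3}\sum_k\abs{k}^4\abs{\hat u_k}^2$ using $\abs{k}^2 \le \abs{k}^4$ for $\abs{k}\ge 1$. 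Thus $\norm{\nabla u}_{L^6(\T^3)} \le c\norm{Au}$, which is the claim. I would present it in this last form, since it needs nothing beyond the Fourier definition of the norms, the identity $A = -\Delta$ on the torus for divergence-free fields, and the Sobolev embedding $H^1(\T^3)\hookrightarrow L^6(\T^3)$; the only thing to be careful about is not accidentally needing a Poincaré-type inequality for $u$ itself.
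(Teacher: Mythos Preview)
Your final argument is correct and is essentially the paper's own proof: Sobolev embedding $H^1 \hookrightarrow L^6$ applied to $\nabla u$, then the Fourier-side identity $\norm{D^2 u}^2 = \sum_k \abs{k}^4\abs{\hat u_k}^2 = \norm{Au}^2$ together with $\norm{\nabla u} \le c\norm{D^2 u}$ (which the paper phrases as the Poincar\'e inequality for the zero-mean function $\nabla u$, and which you obtain equivalently from $\abs{k}^2 \le \abs{k}^4$ for $\abs{k}\ge 1$). The detour through $\norm{u}_{H^2}$ and the zero-mode discussion is unnecessary but harmless, and you correctly abandoned it.
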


\begin{proof}
First, we apply the Sobolev embedding $H^1 \hookrightarrow L^6$
$$ \norm{\nabla u}_{L^6} \le c\norm{\nabla u}_{H^1} = c\lr{\norm{\nabla{u}}^2 + \norm{D^2u}^2}^{1/2}. $$

We can, either by direct computation or by the Poincar\'e inequality (noting that $\nabla u$ has zero mean-value for a periodic function $u$), verify that
$$ \norm{\nabla{u}} \le c\norm{D^2u}. $$
Therefore, we have the desired bound
$$ \norm{\nabla u}_{L^6}^2 \le c\norm{D^2u}^2 = c\sum_{m, n = 1}^3\sum_{k \in \Z^3}{k_m^2k_n^2\abs{\hat{u}_k}^2} = c\sum_{k \in \Z^3}{\abs{k}^4\abs{\hat{u}_k}^2} = c\norm{Au}^2. $$
\end{proof}

\subsection{Definition of strong solutions for the CBF equations}

Strong solutions of the convective Brinkman--Forchheimer equations have similar properties to those of the Navier--Stokes equations. Actually, the additional nonlinear term provides better regularity of solutions. Before explaining this in more detail, let us first define here the strong solutions for the CBF equations in a similar manner to the solutions of the NSE. To this end, we will use the following definition of a~weak solution (for the treatment of weak solutions for the NSE see e.g.\ \cite{Galdi} or \cite{RRSbook}).
\begin{definition} \label{weakcbfr}
We say that the function $u$ is \emph{a weak solution} on the time interval $[0, T)$ of the unforced convective Brinkman--Forchheimer equations $(\ref{cbfralphazero})$ with the~initial condition $u_0 \in H$, if
$$ u \in L^{\infty}({0, T; H}) \cap L^{r + 1}({0, T; L^{r + 1}_{\sigma}}) \cap L^2({0, T; V^1}) $$
and
\begin{align} \nonumber
-\int_{0}^{t}{\dual{u(s)}{\partial_t\varphi(s)}\, \mathrm{d}s} + \int_{0}^{t}{\dual{\nabla u(s)}{\nabla \varphi(s)}\, \mathrm{d}s} + \int_{0}^{t}{\dual{(u(s) \cdot \nabla)u(s)}{\varphi(s)}\, \mathrm{d}s} \\ \label{weakform}
+ \int_{0}^{t}{\dual{\abs{u(s)}^{r - 1}u(s)}{\varphi(s)}\, \mathrm{d}s} = -\dual{u(t)}{\varphi(t)} + \dual{u_0}{\varphi(0)}
\end{align}
for almost every $t \in (0, T)$ and for all divergence-free (only in space variables) test functions $\varphi \in \mathcal{D}_{\sigma}({[0, T) \times \T^3})$.
\end{definition}
A function $u$ is called \textit{a global weak solution} if it is a weak solution on $[0, T)$ for every $T > 0$.

\begin{definition}
\textit{A Leray--Hopf weak solution} of the convective Brinkman--\linebreak Forchheimer equations $(\ref{cbfralphazero})$ with the initial condition $u_0 \in H$ is a weak solution satisfying the following \emph{strong energy inequality}:
\begin{align} \label{energyineq}
\norm{u(t_1)}^2 + 2\int_{t_0}^{t_1}{\lr{\norm{\nabla u(s)}^2 + \norm{u(s)}_{L^{r + 1}}^{r + 1}} \, \mathrm{d}s} \leq \norm{u(t_0)}^2
\end{align}
for almost all initial times $t_0 \in [0, T)$, including zero, and all $t_1 \in (t_0, T)$.
\end{definition}

It is known that for every $u_0 \in H$ there exists at least one global Leray--Hopf weak solution of $(\ref{cbfralphazero})$ (see \cite{Oliveira} for the proof). For the critical absorption exponent $r = 3$ it is also known that all weak solutions satisfy $(\ref{energyineq})$ with equality (see \cite{HR} for the proof of that fact in the periodic case; the same proof but with the approximating sequence changed accordingly can be repeated also for bounded domains; for the details see the upcoming paper \cite{FHR}, which addresses different simultaneous approximation schemes in Lebesgue and Sobolev spaces on bounded domains).

Just as for the Navier--Stokes equations we define strong solutions of the convective Brinkman--Forchheimer equations as weak solutions with additional regularity.

\begin{definition} \label{cbfstrongsol}
We say that a vector field $u$ is a \emph{strong solution} of the convective Brinkman--Forchheimer equations $\lr{\ref{cbfralphazero}}$ if, for the initial condition $u_0 \in V^1$, it is a~weak solution and additionally it possesses higher regularity, i.e.\
$$ u \in L^{\infty}(0, T; V^1) \cap L^2(0, T; V^2). $$
\end{definition}
As mentioned above, strong solutions of the CBF equations are in fact even more regular than those of the NSE. As shown in \cite{HR}, the extra dissipative term guarantees that every strong solution $u$ belongs additionally to the spaces
$$ L^{r + 1}(0, T; L^{3(r + 1)}_{\sigma}) \quad \mbox{and} \quad L^{r + 1}(0, T; \mathcal{N}^{2/(r + 1), r + 1}), $$
where $\mathcal{N}^{s, p} = B^{s, p}_{\infty}$ is a Nikol'ski\u\i \ space.

\section{Local existence of strong solutions} \label{local}

In this section, we prove local-in-time existence of strong solutions for the~convective Brinkman--Forchheimer equations with $r \ge 1$.

\begin{theorem} \label{localcbf}
For every initial condition $u_0 \in V^1$ there exists a time $T > 0$ such that a Leray--Hopf weak solution $u$ starting from $u_0$ is a strong solution of the convective Brinkman--Forchheimer equations $\lr{\ref{cbfralphazero}}$ on the time interval $\left[0, T\right]$. Additionaly $u$ satisfies the bound
\begin{equation} \label{additionalbound}
\int_{0}^{T}{\lr{\int_{\T^3}{\abs{\nabla u(t)}^2\abs{u(t)}^{r - 1}\, \mathrm{d}x}}\, \mathrm{d}t} < \infty.
\end{equation}
\end{theorem}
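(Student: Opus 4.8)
The plan is to prove Theorem~\ref{localcbf} by a Galerkin approximation scheme, deriving a priori estimates that are uniform on a short time interval whose length depends only on $\norm{u_0}_{H^1}$.

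First I would set up the Galerkin system: let $P_n$ denote the orthogonal projection in $H$ onto the span of the first $n$ eigenfunctions of the Stokes operator $A$, and seek $u_n(t) = \sum_{k=1}^n g_k(t)w_k$ solving the projected ODE system
\begin{equation} \nonumber
\frac{\mathrm{d}}{\mathrm{d}t}u_n + Au_n + P_n[(u_n\cdot\nabla)u_n] + P_n C_r(u_n) = 0, \qquad u_n(0) = P_n u_0.
\end{equation}
Since the nonlinear terms are locally Lipschitz (the map $u\mapsto \abs{u}^{r-1}u$ is $C^1$ away from the origin and continuous, and everything lives in a finite-dimensional space), Picard--Lindel\"of gives a unique local solution, and the energy estimate obtained by pairing with $u_n$ — using $\dual{(u_n\cdot\nabla)u_n}{u_n}=0$ and $\dual{C_r(u_n)}{u_n}=\norm{u_n}_{L^{r+1}}^{r+1}\ge 0$ — shows $\norm{u_n(t)}\le\norm{u_0}$, hence the solution is global in time and we have the uniform bounds $u_n\in L^\infty(0,\infty;H)\cap L^2(0,\infty;V^1)\cap L^{r+1}(0,\infty;L^{r+1}_\sigma)$.

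The crucial step is the $H^1$ estimate. Pairing the Galerkin equation with $Au_n$ gives
\begin{equation} \nonumber
\frac12\frac{\mathrm{d}}{\mathrm{d}t}\norm{\nabla u_n}^2 + \norm{Au_n}^2 + \dual{Au_n}{C_r(u_n)} = -\dual{(u_n\cdot\nabla)u_n}{Au_n}.
\end{equation}
By \eqref{cbfnonlineargen} the term $\dual{Au_n}{C_r(u_n)}$ is nonnegative and in fact bounds $\int_{\T^3}\abs{\nabla u_n}^2\abs{u_n}^{r-1}\,\mathrm{d}x$ from above, which is where the extra bound \eqref{additionalbound} will come from; so it may be discarded on the left or retained as needed. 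For the convective term I would use the standard Navier--Stokes estimate on the torus, $\abs{\dual{(u_n\cdot\nabla)u_n}{Au_n}}\le c\norm{u_n}_{L^\infty}\norm{\nabla u_n}\,\norm{Au_n}$ or, better, $\le c\norm{\nabla u_n}^{3/2}\norm{Au_n}^{3/2}$ via Agmon/Ladyzhenskaya-type interpolation, and absorb $\norm{Au_n}^{3/2}$ using Young's inequality into $\tfrac12\norm{Au_n}^2$, leaving
\begin{equation} \nonumber
\frac{\mathrm{d}}{\mathrm{d}t}\norm{\nabla u_n}^2 + \norm{Au_n}^2 + 2\int_{\T^3}\abs{\nabla u_n}^2\abs{u_n}^{r-1}\,\mathrm{d}x \le c\norm{\nabla u_n}^6.
\end{equation}
Setting $y_n(t):=\norm{\nabla u_n(t)}^2$ we get $y_n'\le c\,y_n^3$, a differential inequality whose solution does not blow up before a time $T=T(\norm{u_0}_{H^1})>0$ independent of $n$; on $[0,T]$ this yields uniform bounds on $u_n$ in $L^\infty(0,T;V^1)$, on $Au_n$ in $L^2(0,T;H)$ (hence $u_n$ bounded in $L^2(0,T;V^2)$), and, by integrating, on $\int_0^T\!\int_{\T^3}\abs{\nabla u_n}^2\abs{u_n}^{r-1}$. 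The main obstacle is making sure the convective term can be controlled purely by powers of $\norm{\nabla u_n}$ and $\norm{Au_n}$ in the absence of the Poincar\'e inequality for $u_n$ itself (since $u_n$ need not have zero mean); this is handled by working with the full $H^1$ norm $\norm{u_n}_{H^1}^2 = \norm{u_n}^2+\norm{\nabla u_n}^2$, using the already-established bound $\norm{u_n}\le\norm{u_0}$ to close the estimate, and invoking Poincar\'e only for the mean-zero field $\nabla u_n$ as in the proof of Lemma~\ref{crucialestimate}.

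Finally I would pass to the limit. From the uniform bounds, extract a subsequence with $u_n\rightharpoonup u$ weakly-$*$ in $L^\infty(0,T;V^1)$, weakly in $L^2(0,T;V^2)$ and in $L^{r+1}(0,T;L^{r+1}_\sigma)$; bounding $\partial_t u_n$ in a suitable dual space (using Lemma~\ref{cbfdifference} to control $\abs{u_n}^{r-1}u_n$ in, say, $L^{(r+1)/r}$) and applying the Aubin--Lions lemma gives strong convergence of $u_n$ in $L^2(0,T;V^1)$ and a.e. on $(0,T)\times\T^3$. Almost-everywhere convergence together with the uniform $L^{r+1}$-type bound lets me identify the limit of $\abs{u_n}^{r-1}u_n$ via Vitali's theorem (or the standard monotonicity/Minty argument using Lemma~\ref{monotone}), and the convective term passes to the limit by the usual compactness argument. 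The limit $u$ is then a weak solution lying in $L^\infty(0,T;V^1)\cap L^2(0,T;V^2)$, hence a strong solution by Definition~\ref{cbfstrongsol}; the energy (in)equality \eqref{energyineq} holds because, being a strong solution, $u$ may be paired with itself, and lower semicontinuity of norms under weak convergence upgrades this to the Leray--Hopf property; and Fatou's lemma applied to the uniform bound on $\int_0^T\!\int_{\T^3}\abs{\nabla u_n}^2\abs{u_n}^{r-1}$ yields \eqref{additionalbound}.
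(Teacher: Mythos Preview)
Your proposal is correct and follows essentially the same route as the paper: the paper presents the argument as formal calculations (multiply by $Au$, use Lemma~\ref{witeklemma} for the absorption term, estimate the convective term as $c\norm{u}_{H^1}^{3/2}\norm{Au}^{3/2}$, and reduce to $X'\le cX^3$ with $X=\norm{u}_{H^1}^2$), explicitly remarking that these are ``made rigorous, for example, by the use of Galerkin approximations and passage to the limit'' --- which is precisely what you carry out. The only minor procedural difference is how the full $H^1$-norm enters: the paper \emph{adds} the energy inequality to the $\nabla u$-estimate to obtain an ODE for $\norm{u}_{H^1}^2$, whereas you propose to feed the already-established bound $\norm{u_n}\le\norm{u_0}$ into the convective-term estimate; both close the loop in the same way.
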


Below we only show formal calculations, which can be made rigorous, for example, by the use of Galerkin approximations and passage to the limit.

\begin{proof}
We work with the equations $\lr{\ref{cbfralphazero}}$ in their functional form
\begin{equation} \nonumber
\partial_t u + Au + B(u) + C_r(u) = 0,
\end{equation}
where $B(u, v) := \mathbb{P}(u \cdot \nabla)v$ and $B(u) := B(u, u)$ for $u, v \in V^1$. Let $u$ be a global Leray--Hopf weak solution of $\lr{\ref{cbfralphazero}}$ starting from $u_0 \in V^1$.
Multiplying formally $\lr{\ref{cbfralphazero}}$ by $Au$ and integrating over the spatial domain, we obtain
\begin{equation} \label{szacujcbf}
\frac{1}{2}\frac{\mathrm{d}}{\mathrm{d}t}\norm{\nabla u}^2 + \norm{Au}^2 + \dual{C_r(u)}{Au} \leq \abs{\dual{B(u)}{Au}}.
\end{equation}

First, we estimate the convective term $\dual{B(u)}{Au}$ using H\"older's, Sobolev's and Young's inequalities and also Lemma $\ref{crucialestimate}$
\begin{align} \nonumber
\abs{\dual{B(u)}{Au}} &\leq \int_{\T^3}{\abs{u}\abs{\nabla u}\abs{Au}\, \mathrm{d}x} \leq \norm{u}_{L^6}\norm{\nabla u}_{L^3}\norm{Au} \\ \nonumber
&\leq c\norm{u}_{H^1}\norm{\nabla u}^{{1}/{2}}\norm{\nabla u}_{L^6}^{{1}/{2}}\norm{Au} \\ \label{szacuj}
&\leq c\norm{u}_{H^1}^{{3}/{2}}\norm{Au}^{{3}/{2}} \leq c\norm{u}_{H^1}^{6} + \frac{1}{2}\norm{Au}^{2}.
\end{align}

We recall that, by Lemma \ref{witeklemma}, we have the inequality $\lr{\ref{cbfnonlineargen}}$
$$ \dual{C_r(u)}{Au} \geq \int_{\T^3}{\abs{\nabla u}^2\abs{u}^{r - 1}\, \mathrm{d}x} \geq 0 \quad \mbox{for} \quad r \geq 1. $$

Using this fact and also the estimate $\lr{\ref{szacuj}}$, we obtain from $\lr{\ref{szacujcbf}}$ a differential inequality
\begin{equation} \nonumber
\frac{1}{2}\frac{\mathrm{d}}{\mathrm{d}t}\norm{\nabla u}^2 + \frac{1}{2}\norm{Au}^2 + \int_{\T^3}{\abs{\nabla u}^2\abs{u}^{r - 1}\, \mathrm{d}x} \leq c\norm{u}_{H^1}^6.
\end{equation}
Noting that $u$ satisfies also [see energy inequality $(\ref{energyineq})$]
\begin{equation} \nonumber
\frac{1}{2}\frac{\mathrm{d}}{\mathrm{d}t}\norm{ u}^2 + \norm{\nabla u}^2 + \norm{u}_{L^{r + 1}}^{r + 1} \le 0,
\end{equation}
we have (adding the above inequalities and dropping some terms on the left-hand side)
\begin{equation} \nonumber
\frac{1}{2}\frac{\mathrm{d}}{\mathrm{d}t}\norm{u}_{H^1}^2 + \frac{1}{2}\norm{Au}^2 + \int_{\T^3}{\abs{\nabla u}^2\abs{u}^{r - 1}\, \mathrm{d}x} \leq c\norm{u}_{H^1}^6.
\end{equation}

By setting $X(t) := \norm{u(t)}_{H^1}^2$, we rewrite the above in the form
\begin{equation} \label{nierownosca}
X' + \norm{Au}^2 + 2\int_{\T^3}{\abs{\nabla u}^2\abs{u}^{r - 1}\, \mathrm{d}x} \leq cX^3,
\end{equation}
from which we obtain a differential problem
\begin{equation} \label{nierownoscODE}
\left\{\begin{split}
X' &\leq cX^3, \\
X(0) &= \norm{u_0}_{H^1}^2.
\end{split}\right.
\end{equation}
We would obtain the same differential inequality by following the above procedure for the Navier--Stokes equations (details for the NSE case can be found for example in \cite{RRSbook}). We can conclude that $X$ is no greater than the solution of $\lr{\ref{nierownoscODE}}$ turned into a differential equation instead of differential inequality. The solution of this ODE blows up in finite time $\tilde{T} = [2cX(0)^2]^{-1}$.
Therefore, for $0 \leq t \leq \tilde{T}/{2}$
\begin{equation} \nonumber
\norm{u(t)}_{H^1}^2 = X(t) \leq \frac{\norm{u_0}_{H^1}^2}{\sqrt{1 - 2c\norm{u_0}_{H^1}^4t}} \leq c\norm{u_0}_{H^1}^2.
\end{equation}
Using this bound and integrating $\lr{\ref{nierownosca}}$ over the time interval $[0, t]$, we obtain
\begin{equation} \nonumber
\norm{u(t)}_{H^1}^2 + \int_{0}^{t}{\norm{Au(s)}^2 \, \mbox{ds}} + 2\int_{0}^{t}{\lr{\int_{\T^3}{\abs{\nabla u(s)}^2\abs{u(s)}^{r - 1}\, \mathrm{d}x}}\, \mathrm{d}s} < \infty.
\end{equation}

Hence, we can conclude the proof with $T: = \lr{4c\norm{u_0}_{H^1}^4}^{-1}$.
\end{proof}

Theorem \ref{localcbf} tells us that the time of existence of strong solutions of the unforced CBF equations $(\ref{cbfralphazero})$ can be bounded below in terms of the initial condition
$$ T \gtrsim \norm{u_0}_{H^1}^{-4}. $$
We recall that we have the same situation for strong solutions of the Navier--Stokes equations; however, for the CBF equations we get the additional bound $(\ref{additionalbound})$.

\section{Uniqueness of strong solutions} \label{uniqueness}

In this section, we prove the uniqueness of strong solutions of the convective Brinkman--Forchheimer equations for incompressible fluids. By \emph{uniqueness} we mean here uniqueness of strong solutions in the larger class of weak solutions satisfying the energy inequality, which is often called `weak-strong uniqueness'. Classical uniqueness of strong solutions follows from that result since every strong solution is by definition a `{more regular}' weak solution.

To achieve our goal we need to establish some properties of strong solutions of the CBF equations. We follow here proofs of analogous results for the $3$D NSE equations, which can be found in many places, e.g.\ in \cite{Galdi} or \cite{RRSbook}.

First, we show that due to Definition \ref{cbfstrongsol} all the terms in the CBF equations are well-defined $L^2$ functions in the space-time domain.
\begin{lemma} \label{stronglemazero}
Let $u$ be a strong solution of the convective Brinkman--Forchheimer equations with $r \in [1, 3]$. Then
\begin{equation} \nonumber
\partial_tu, \quad \Delta u, \quad (u \cdot \nabla)u \quad \mbox{and} \quad \abs{u}^{r-1}u
\end{equation}
are all elements of $L^2(0, T; L^2)$.
\end{lemma}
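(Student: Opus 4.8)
The plan is to treat the four terms one at a time, using the strong-solution regularity $u \in L^\infty(0,T;V^1) \cap L^2(0,T;V^2)$ together with the extra information furnished by Theorem~\ref{localcbf}, in particular the bound \eqref{additionalbound}.

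First I would handle $\Delta u$: since $u \in L^2(0,T;V^2)$ we have $D^2 u \in L^2(0,T;L^2)$, hence $\Delta u \in L^2(0,T;L^2)$ immediately. Next, for the convective term $(u\cdot\nabla)u$, I would estimate pointwise in time by H\"older's inequality, $\norm{(u\cdot\nabla)u} \le \norm{u}_{L^6}\norm{\nabla u}_{L^3}$, and then interpolate $\norm{\nabla u}_{L^3} \le c\norm{\nabla u}^{1/2}\norm{\nabla u}_{L^6}^{1/2} \le c\norm{\nabla u}^{1/2}\norm{Au}^{1/2}$ using Lemma~\ref{crucialestimate}, together with $\norm{u}_{L^6}\le c\norm{u}_{H^1}$; this gives $\norm{(u\cdot\nabla)u}^2 \le c\norm{u}_{H^1}^3\norm{Au}$, which is integrable in time because $\norm{u}_{H^1}\in L^\infty$ and $\norm{Au}\in L^2$. (This is exactly the structure already exploited in the estimate \eqref{szacuj}.) Since $\partial_t u = -Au - B(u) - C_r(u)$, once the other three terms are shown to lie in $L^2(0,T;L^2)$ it follows that $\partial_t u$ does too, so I would postpone that term to the end.

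The remaining and most delicate term is the absorption term $\abs{u}^{r-1}u$, and here the restriction $r\in[1,3]$ is essential. I would split according to whether $r\le 3$ as follows: write $\abs{\abs{u}^{r-1}u}^2 = \abs{u}^{2r}$ and control $\int_{\T^3}\abs{u}^{2r}\,\mathrm{d}x$. For $r\le 3$ one has $2r \le r+1 + \tfrac{r+1}{?}$... more cleanly, I would use that the extra regularity of strong solutions noted after Definition~\ref{cbfstrongsol} gives $u \in L^{r+1}(0,T;L^{3(r+1)}_\sigma)$; since $2r \le 3(r+1)$ for all $r\ge 0$ and, crucially, the time exponent works out because $\norm{u}_{L^{2r}}^{2r}$ can be bounded by interpolating between $\norm{u}_{L^{r+1}}$ and $\norm{u}_{L^{3(r+1)}}$ when $r+1 \le 2r \le 3(r+1)$, i.e. exactly when $r\ge 1$. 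Alternatively, and perhaps more in the spirit of the paper, I would bound $\int_0^T\int_{\T^3}\abs{u}^{2r}$ using $\abs{u}^{2r} = \abs{u}^{r-1}\cdot\abs{u}^{r+1}$ and noting $r-1 \le 2$, so $\abs{u}^{r-1}\le 1 + \abs{u}^2$, reducing matters to $\int\int \abs{u}^{r+1}$ (finite by the energy inequality) and $\int\int \abs{u}^{r+3} \le \int\int \abs{\nabla u}^2\abs{u}^{r-1}$-type quantities controlled via \eqref{additionalbound} together with a Sobolev embedding; in fact $\abs{u}^{r+3} = \bigl(\abs{u}^{(r-1)/2}\abs{u}^2\bigr)^{\ldots}$ and $\nabla(\abs{u}^{(r+1)/2}u)$ relates to $\abs{\nabla u}\abs{u}^{(r-1)/2}$, so $\abs{u}^{(r+1)/2}u \in L^2(0,T;H^1) \hookrightarrow L^2(0,T;L^6)$, giving $u \in L^{r+3}(0,T;L^{3(r+3)})$, and $2r\le r+3$ precisely for $r\le 3$. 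I expect the bookkeeping of which Lebesgue exponents in space and time are available, and checking that $r\in[1,3]$ is exactly the range in which they close, to be the main obstacle; everything else is a direct application of H\"older, interpolation, Lemma~\ref{crucialestimate}, and the bounds already established in Theorem~\ref{localcbf}.

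Having placed $Au$, $B(u)$ and $C_r(u)$ in $L^2(0,T;L^2)$, I would conclude by reading off $\partial_t u = -Au - B(u) - C_r(u) \in L^2(0,T;L^2)$ from the equation in its functional form, which completes the proof.
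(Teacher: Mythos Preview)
Your treatment of $\Delta u$, $(u\cdot\nabla)u$ and the deduction of $\partial_t u$ from the equation is fine and matches the standard Navier--Stokes argument the paper defers to.

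For the absorption term, however, you have badly overcomplicated matters. The paper's argument is a one-liner: since $r\le 3$ we have $2r\le 6$, so on $\T^3$ the Sobolev embedding and nesting give $\norm{u}_{L^{2r}}\le c\norm{u}_{H^1}$, whence
\[
\int_0^T\norm{\abs{u}^{r-1}u}^2\,\mathrm{d}t=\int_0^T\norm{u}_{L^{2r}}^{2r}\,\mathrm{d}t\le c\,T\,\norm{u}_{L^\infty(0,T;H^1)}^{2r}<\infty,
\]
using only $u\in L^\infty(0,T;V^1)$. No interpolation in time, no appeal to the extra integral \eqref{additionalbound} or to the $L^{r+1}(0,T;L^{3(r+1)})$ regularity is needed; the restriction $r\le 3$ enters precisely through $2r\le 6$. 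Your detours are not wrong in spirit, but the second one contains an index slip (the bound \eqref{additionalbound} controls $\abs{\nabla u}\abs{u}^{(r-1)/2}$, hence puts $\abs{u}^{(r+1)/2}$, not $\abs{u}^{(r+1)/2}u$, in $L^2(0,T;H^1)$), and in any case they obscure the elementary reason the lemma holds.
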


\begin{proof}
We only need to consider the absorption term since the other terms can be dealt with in a similar way as in the analogous result for the Navier--Stokes equations.
We show that $\abs{u}^{r - 1}u$ is square integrable in the space-time domain [equaivalently that $u \in L^{2r}(0, T; L^{2r})$] using nesting of $L^p(\T^3)$ spaces and the Sobolev embedding
\begin{align} \nonumber
\calkat{\norm{\abs{u(t)}^{r - 1}u(t)}^2} &\leq \calkat{\norm{u(t)}_{L^{2r}}^{2r}} \leq c\calkat{ \norm{u(t)}_{H^1}^{2r}} \\ \label{tworestimate}
&\leq c\norm{u}_{L^{\infty}(0, T; H^1)}^{2r} < \infty.
\end{align}
\end{proof}

[Note that we can extend Lemma $\ref{stronglemazero}$ up to $r \le 5$. Indeed, by interpolation ($r \ge 3$) and Agmon's inequality in $3$D we have
\begin{align*}
\norm{u}_{L^{2r}}^{2r} \le \norm{u}_{L^6}^{6}\norm{u}_{L^{\infty}}^{2r - 6} \le \norm{u}_{L^6}^{6}\norm{u}_{H^1}^{r - 3}\norm{u}_{H^2}^{r - 3} \le c\norm{u}_{H^1}^{r + 3}\norm{u}_{H^2}^{r - 3}
\end{align*}
and therefore we obtain
\begin{align*}
\calkat{\norm{u(t)}_{L^{2r}}^{2r}} \le c\norm{u}_{L^{\infty}(0, T; H^1)}^{r + 3}\lr{\calkat{\norm{u(t)}_{H^2}^{2}}}^{(r - 3)/2},
\end{align*}
which is bounded for any strong solution $u$.]

The next result states that for almost all times the Leray projection of the unforced CBF equations $(\ref{cbfralphazero})$ is equal to zero.
\begin{lemma} \label{stronglemaone}
Let $u$ be a strong solution of the convective Brinkman--Forchheimer equations with $r \in [1, 3]$. Then
\begin{equation} \label{r:stronglemaone}
\calkat{\dual{\partial_tu -\Delta u + (u \cdot \nabla)u + \abs{u}^{r - 1}u}{w}} = 0
\end{equation}
for all $w \in L^2({0, T; H})$.
\end{lemma}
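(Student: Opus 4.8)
The plan is to exploit Lemma \ref{stronglemazero}, which tells us that each of the four terms $\partial_t u$, $\Delta u$, $(u\cdot\nabla)u$ and $\abs{u}^{r-1}u$ lies in $L^2(0,T;L^2)$, so that the bracketed expression in $(\ref{r:stronglemaone})$ defines an element $g \in L^2(0,T;L^2)$. Moreover, since $u$ is a strong solution it is in particular a weak solution, so $u$ is divergence-free (it takes values in $V^1$) and the weak formulation $(\ref{weakform})$ holds for every test function $\varphi \in \mathcal{D}_\sigma([0,T)\times\T^3)$. The goal is to upgrade $(\ref{weakform})$ to the statement that $\dual{g(t)}{w}$ integrates to zero against every $w\in L^2(0,T;H)$, which amounts to showing that $\mathbb{P}g(t) = 0$ for almost every $t$.

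First I would integrate the term $-\int_{t_0}^{t_1}\dual{u(s)}{\partial_t\varphi(s)}\,\mathrm{d}s$ by parts in time: because $\partial_t u \in L^2(0,T;L^2)$ the function $u$ is (after modification on a null set) absolutely continuous from $[0,T)$ into $L^2$, so this term equals $\int_{t_0}^{t_1}\dual{\partial_t u(s)}{\varphi(s)}\,\mathrm{d}s - \dual{u(t_1)}{\varphi(t_1)} + \dual{u(t_0)}{\varphi(t_0)}$, and the boundary contributions cancel exactly with the right-hand side of $(\ref{weakform})$. Similarly, since $u \in L^2(0,T;V^2)$ we may integrate by parts in space in the viscous term to replace $\dual{\nabla u}{\nabla\varphi}$ by $\dual{-\Delta u}{\varphi}$. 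After these manipulations $(\ref{weakform})$ becomes
\begin{equation} \nonumber
\int_{t_0}^{t_1}\dual{\partial_t u(s) - \Delta u(s) + (u(s)\cdot\nabla)u(s) + \abs{u(s)}^{r-1}u(s)}{\varphi(s)}\,\mathrm{d}s = 0,
\end{equation}
i.e. $\int_{t_0}^{t_1}\dual{g(s)}{\varphi(s)}\,\mathrm{d}s = 0$, valid for all $t_0, t_1$ and all $\varphi\in\mathcal{D}_\sigma$; since $t_0,t_1$ are arbitrary this forces $\dual{g(s)}{\varphi(s)}$, hence $\dual{g(s)}{\psi}$ for every fixed $\psi\in\mathcal{D}_\sigma$, to vanish for almost every $s$. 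Because $\mathcal{D}_\sigma$ is dense in $H = L^2_\sigma$ and $g(s)\in L^2$, this says precisely that $\mathbb{P}g(s) = 0$ for a.e. $s\in(0,T)$.

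It then remains to integrate against a general $w\in L^2(0,T;H)$: writing $\dual{g(s)}{w(s)} = \dual{\mathbb{P}g(s)}{w(s)} = 0$ for a.e. $s$ (using that $w(s)\in H$ and that $\mathbb{P}$ is the orthogonal projection onto $H$ in $L^2$), and noting that $s\mapsto\dual{g(s)}{w(s)}$ is integrable on $(0,T)$ by Cauchy--Schwarz since both $g$ and $w$ lie in $L^2(0,T;L^2)$, we conclude that the integral in $(\ref{r:stronglemaone})$ vanishes. The only genuinely delicate point is justifying the integration by parts in time: one must know that $u$ has a representative which is continuous into $L^2$ and that the fundamental theorem of calculus applies, which follows from $u\in L^2(0,T;V^2)$ together with $\partial_t u\in L^2(0,T;L^2)$ via a standard Lions--Magenes-type lemma; everything else is a density argument. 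I would also remark that a slicker route is simply to observe that for a strong solution the equation $\partial_t u + Au + B(u) + C_r(u) = 0$ holds as an identity in $L^2(0,T;H)$ — which is essentially the content of Lemma \ref{stronglemazero} combined with the weak formulation — and then pair with $w$ directly.
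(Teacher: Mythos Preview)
Your proposal is correct and is precisely the standard Navier--Stokes argument the paper defers to: the paper omits the proof entirely, noting only that it ``follows the same lines as in the Navier--Stokes case'' and that, by Lemma~\ref{stronglemazero}, the absorption term causes no additional difficulty. Your use of Lemma~\ref{stronglemazero} to place $g$ in $L^2(0,T;L^2)$, the integration by parts in time (via $\partial_t u\in L^2(0,T;L^2)$) and in space (via $u\in L^2(0,T;V^2)$), followed by the density argument yielding $\mathbb{P}g(s)=0$ a.e., is exactly this standard route; the one cosmetic tightening I would make is to pass from the weak formulation to $\dual{g(s)}{\psi}=0$ by choosing test functions of product form $\varphi(t,x)=\chi(t)\psi(x)$ rather than relying on varying $t_0,t_1$, and then use a countable dense family of $\psi$'s to obtain a single null set.
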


Again, the proof follows the same lines as in the Navier--Stokes case. We omit it here completely since, due to Lemma $\ref{stronglemazero}$, there are no additional problems caused by the absorption term $\abs{u}^{r - 1}u$.

The last property which we will need to prove the main result of this section states that a strong solution of the CBF equations (actually any function with the same regularity as a strong solution) can be used as a test function in the weak formulation $(\ref{weakform})$.
\begin{lemma} \label{stronglemmawo}
Suppose that $v$ is a weak solution of the convective Brinkman--\linebreak Forchheimer equations with $r \in \lra{1, 3}$. If $u$ has the regularity of a strong solution of the CBF equations, that is
$$ u \in L^2({0, T; H^2 \cap V^1}) \cap L^{r + 1}(0, T; L^{r + 1}) \quad \& \quad \partial_tu \in L^2({0, T; L^2}), $$
then for all times $t \in [0, T]$
\begin{align} \nonumber
-\calkawoddoz{0}{t}{\dual{v}{\partial_t u}} &+ \calkawoddoz{0}{t}{\dual{\nabla v}{\nabla u}} + \calkawoddoz{0}{t}{\dual{(v \cdot \nabla)v}{u}}
\\ \nonumber
&+ \calkawoddoz{0}{t}{\dual{\abs{v}^{r - 1}v}{u}} = \dual{v(0)}{u(0)} - \dual{v(t)}{u(t)}.
\end{align}
\end{lemma}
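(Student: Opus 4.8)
The plan is to take the weak formulation $(\ref{weakform})$ for $v$ and substitute $u$ as a test function. The immediate difficulty is that $(\ref{weakform})$ only allows smooth, compactly supported (in space) divergence-free test functions $\varphi \in \mathcal{D}_\sigma([0,T)\times\T^3)$, whereas $u$ only has the regularity of a strong solution. So the first step is a density/approximation argument: I would mollify $u$ in both space and time (convolution with a smooth kernel, after extending $u$ suitably in time near the endpoints) to produce a sequence $u_n \in \mathcal{D}_\sigma([0,T]\times\T^3)$ with $u_n \to u$ in $L^2(0,T;H^2\cap V^1)$, in $L^{r+1}(0,T;L^{r+1})$, and $\partial_t u_n \to \partial_t u$ in $L^2(0,T;L^2)$; on the torus the Leray projector commutes with mollification, so divergence-freeness is preserved. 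One should be a little careful that $(\ref{weakform})$ as stated holds for test functions vanishing near $t=T$; working on $[0,t]$ for $t<T$ (and then letting $t\to T$ using continuity in time of the relevant quantities) handles this, or one uses the version with general endpoints $t_0=0$, $t_1=t$.

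Next I would check that each term in $(\ref{weakform})$, written for $v$ against the test function $u_n$, passes to the limit. The linear terms $\int_0^t \dual{v}{\partial_t u_n}$ and $\int_0^t \dual{\nabla v}{\nabla u_n}$ converge because $v \in L^\infty(0,T;H)\cap L^2(0,T;V^1)$ pairs against the $L^2(0,T;L^2)$ and $L^2(0,T;V^1)$ convergence of $u_n$; the boundary terms $\dual{v(s)}{u_n(s)}$ at $s=0,t$ converge once one knows $v$ is weakly continuous into $H$ (standard for weak solutions) and $u_n(s)\to u(s)$ in $L^2$ for those fixed times (arrange the mollification so this holds, or note $u\in C([0,T];V^1)$ for a strong solution). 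For the absorption term, $\abs{v}^{r-1}v \in L^{(r+1)/r}(0,T;L^{(r+1)/r})$ since $v\in L^{r+1}(0,T;L^{r+1}_\sigma)$, and this is exactly the dual pairing that converges against $u_n\to u$ in $L^{r+1}(0,T;L^{r+1})$. The convective term $\int_0^t\dual{(v\cdot\nabla)v}{u_n}$ is the one requiring the most care: using $v\in L^\infty(0,T;H)\cap L^2(0,T;V^1)$ one has $(v\cdot\nabla)v\in L^{4/3}(0,T;L^{3/2})$ (by $\norm{(v\cdot\nabla)v}_{L^{3/2}}\le \norm{v}_{L^6}\norm{\nabla v}_{L^2}$ together with the $L^\infty_tH$ bound giving $\norm{v}_{L^6}\lesssim\norm{v}_{H^1}$), while $u_n\to u$ in $L^4(0,T;L^3)$ — this follows from $u\in L^\infty(0,T;H^1)\cap L^2(0,T;H^2)$ via interpolation and Lemma \ref{crucialestimate}, so $\nabla u\in L^4(0,T;L^3)$ and hence $u\in L^4(0,T;W^{1,3})\hookrightarrow L^4(0,T;L^\infty)$ in fact more than enough. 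The pairing $L^{4/3}L^{3/2}$ against $L^4 L^3$ is admissible, so this term converges as well.

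The main obstacle, as flagged, is the convective term: one must verify both that $(v\cdot\nabla)v$ lies in a reasonable space using only the weak-solution regularity of $v$, and that the mollified $u_n$ converge in the matching dual space — the latter genuinely uses the extra $L^2(0,T;H^2)$ regularity of $u$ and Lemma \ref{crucialestimate} to get the $L^6$ control of $\nabla u$. A secondary technical point is justifying the time-mollification near $t=0$ and $t=T$ without spoiling the boundary terms; a clean way is to prove the identity first for $t$ in the interior with a cutoff in time, then remove the cutoff by the $L^2(0,T;L^2)$ bound on $\partial_t u$ which forces $u\in C([0,T];L^2)$ (indeed $u \in C([0,T];V^1)$), so $\dual{v(t)}{u(t)}$ makes sense and is continuous in $t$. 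Assembling these limits in $(\ref{weakform})$ for $v$ tested against $u_n$ and sending $n\to\infty$ yields exactly the claimed identity.
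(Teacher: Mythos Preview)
Your approach is correct and runs parallel to the paper's, but you choose a different approximation scheme in space. The paper approximates $u$ by truncating its Fourier series over \emph{cubes} $\mathcal{Q}_n = [-n,n]^3 \cap \Z^3$ rather than balls, and then mollifies in time. The reason for this choice is spelled out explicitly: one needs $\norm{u_n(t)}_{L^{r+1}} \le c\norm{u(t)}_{L^{r+1}}$ uniformly in $n$, and ball truncation fails to give this in $L^p$ for $p\neq 2$ by Fefferman's ball-multiplier theorem, whereas cube truncation inherits the good $L^p$ behaviour of one-dimensional Dirichlet partial sums. Your mollification-in-space approach sidesteps that discussion entirely, since convolution with a nonnegative $L^1$-normalised kernel is automatically a contraction on every $L^p$; in that sense your route is a little more elementary and does not need the Fefferman reference. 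Both arguments then handle the absorption term identically, via the $L^{(r+1)/r}$--$L^{r+1}$ duality, and both defer the Navier--Stokes terms to the standard argument (the paper simply cites Lemma~6.6 of \cite{RRSbook} and the convergences $u_n\to u$ in $L^2_tH^2$, $C_tH^1$, and $\partial_t u_n\to\partial_t u$ in $L^2_tL^2$).

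One small wrinkle in your sketch: the claim $(v\cdot\nabla)v \in L^{4/3}(0,T;L^{3/2})$ does not follow from the H\"older split you wrote, since $\norm{v}_{L^6}\norm{\nabla v}_{L^2}$ is only $L^1$ in time (both factors are merely $L^2_t$). The estimate that does work is $\norm{(v\cdot\nabla)v}_{L^{6/5}} \le \norm{v}_{L^3}\norm{\nabla v}_{L^2}$ together with $\norm{v}_{L^3} \le \norm{v}^{1/2}\norm{v}_{L^6}^{1/2} \in L^4_t$, giving $(v\cdot\nabla)v \in L^{4/3}_tL^{6/5}_x$, which pairs against $u_n \to u$ in $L^4_tL^6_x$ (immediate already from $u \in L^\infty_tH^1$). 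This is a bookkeeping slip rather than a gap in the strategy.
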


\begin{proof}
For each $t \in [0, T]$ we take
$$ u_n(t, x) := (u \ast \varphi_n)(t, x) = \int_{\T^3}{u(t, y)\varphi_n(x - y)\, \mathrm{d}y}, $$
where $\varphi_n(x) = n^3\varphi(nx)$ is a family of standard mollifiers connected with a positive, smooth function $\varphi(x)$ with compact support in $\T^3$. We recall that the sequence $u_n$ converges in $H^2$ and also in $L^{r + 1}(\T^3)$ with
$$ \norm{u_n(t)}_{L^{r + 1}} \le c\norm{u(t)}_{L^{r + 1}} $$
for a.e.\ $t \in [0, T]$, which is the key ingredient in adapting the proof from the Navier--Stokes case. Mollifying $u_n$ in time (see \cite{HR} for details of a similar argument) we obtain a sequence of test functions such that
\begin{align} \label{aa}
u_n &\to u \quad &\mbox{in} \quad &L^2(0, T; H^2), \\ \label{bb}
\partial_t u_n &\to \partial_t u \quad &\mbox{in} \quad &L^2({0, T; L^2}), \\ \label{cc}
u_n &\to u \quad &\mbox{in} \quad &L^{r + 1}(0, T; L^{r + 1}),
\end{align}
as $n \to \infty$. We note that $u \in L^2({0, T; H^2})$ and $\partial_tu \in L^2({0, T; L^2})$ implies that $u \in C([0, T]; H^1)$ (see e.g.\ Proposition $1.35$ in \cite{RRSbook}) and hence we also have
\begin{equation} \label{dd}
u_n \to u \quad \mbox{in} \quad C([0, T]; H^1).
\end{equation}

Since $v$ is a weak solution of the CBF equations, we have [note that $\dv{(u\ast\varphi_n)} = (\dv{u})\ast\varphi_n = 0$, so $u_n$ are valid test functions]
\begin{align} \nonumber
-\calkawoddoz{0}{t}{\dual{v}{\partial_t u_n}} &+ \calkawoddoz{0}{t}{\dual{\nabla v}{\nabla u_n}} + \calkawoddoz{0}{t}{\dual{(v \cdot \nabla)v}{u_n}} \\ \label{r:stronglemmawo}
&+ \calkawoddoz{0}{t}{\dual{\abs{v}^{r - 1}v}{u_n}} = \dual{v(0)}{u_n(0)} - \dual{v(t)}{u_n(t)}
\end{align}
for all $t \in [0, T]$. To prove the lemma, it is sufficient to pass to the limit in $\lr{\ref{r:stronglemmawo}}$.

Passing to the limit in the Navier--Stokes terms is standard and follows from $\lr{\ref{aa}}$, $\lr{\ref{bb}}$ and $\lr{\ref{dd}}$. Therefore, we can focus on the Brinkman--Frorchheimer nonlinearity; we note that by standard estimates and $\lr{\ref{cc}}$ we have
\begin{align} \nonumber
\abs{\calkawoddoz{0}{t}{\dual{\abs{v}^{r - 1}v}{u - u_n}}} &\leq \calkawoddoz{0}{t}{\int_{\T^3}{\abs{v}^r\abs{u - u_n}}} \leq \calkawoddoz{0}{t}{\norm{v}_{L^{r + 1}}^r\norm{u - u_n}_{L^{r + 1}}} \\ \nonumber
&\leq \norm{v}_{L^{r + 1}(0, T; L^{r + 1})}^r\norm{u - u_n}_{L^{r + 1}(0, T; L^{r + 1})} \to 0
\end{align}
as $n \to \infty$, which ends the proof.
\end{proof}

Finally, we can prove the main result of this section; we show that strong solutions are unique in the class of weak solutions satisfying the Energy Inequality (all Leray--Hopf weak solutions, not necessarily constructed via Galerkin approximation method). In the critical case when $r = 3$ (cubic nonlinearity $\abs{u}^2u$), since all weak solutions satisfy the Energy Equality (as shown in \cite{HR} on a periodic domain), this means that strong solutions are unique in the class of all weak solutions.

\begin{theorem}[Weak-strong uniqueness] \label{weakstrong}
Suppose that $u$ is a strong solution of the convective Brinkman--Forchheimer equations with $r \in \lra{1, 3}$ on the time interval $\lra{0, T}$, and that $v$ is any weak solution on $[0, T]$ arising from the same initial condition $v(0) = u(0) \in V^1$, that satisfies the Energy Inequality
\begin{align} \nonumber
\frac{1}{2}\norm{v(t)}^2 &+ \calkawoddoz{0}{t}{\norm{\nabla v(s)}^2} + \calkawoddoz{0}{t}{\norm{v(s)}^{r + 1}_{L^{r + 1}}} \leq \frac{1}{2}\norm{v(0)}^2
\end{align}
for $t \in \lra{0, T}$. Then $u \equiv v$ on $\lra{0, T}$.
\end{theorem}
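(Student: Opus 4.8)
The plan is to run the classical weak--strong uniqueness argument (Serrin/Prodi-type, adapted to the CBF setting), with the key new ingredient being the monotonicity of the absorption nonlinearity from Lemma~\ref{monotone}. Write $w := u - v$, where $u$ is the strong solution and $v$ is the Leray--Hopf weak solution with $v(0) = u(0)$. First I would combine three energy-type identities: (i) the energy equality for the strong solution $u$ — which is legitimate since, by Lemma~\ref{stronglemazero}, all terms in the equation for $u$ lie in $L^2(0,T;L^2)$, so one may pair the equation with $u$ itself; (ii) the Energy Inequality assumed for $v$; and (iii) the mixed identity from Lemma~\ref{stronglemmawo}, which is exactly what licenses using the strong solution $u$ as a test function in the weak formulation for $v$ (and, symmetrically, using $v$ in the equation for $u$ via Lemma~\ref{stronglemaone}). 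Adding (i) and (ii) and subtracting twice (iii) produces a differential inequality for $\tfrac12\norm{w(t)}^2$, with $w(0) = 0$, in which the viscous term contributes $-\int_0^t\norm{\nabla w}^2$, the absorption terms contribute $-\int_0^t\dual{\abs{u}^{r-1}u - \abs{v}^{r-1}v}{u-v}$, and the convective terms leave a trilinear remainder.

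The next step is to identify that trilinear remainder. The convective contributions $b(u,u,u)$, $b(v,v,v)$ and the cross terms $b(v,v,u)$ collapse, after using the standard antisymmetry $b(a,b,b)=0$ for divergence-free fields, to the single term $-\int_0^t b(w,w,u)\,\mathrm ds = -\int_0^t \dual{(w\cdot\nabla)w}{u}\,\mathrm ds$ (the familiar reduction from the Navier--Stokes weak--strong uniqueness proof). So one arrives at
\begin{equation} \nonumber
\tfrac12\norm{w(t)}^2 + \int_0^t\norm{\nabla w}^2\,\mathrm ds + \int_0^t\dual{\abs{u}^{r-1}u - \abs{v}^{r-1}v}{u-v}\,\mathrm ds \le \int_0^t \abs{\dual{(w\cdot\nabla)w}{u}}\,\mathrm ds.
\end{equation}
By Lemma~\ref{monotone} the third term on the left is nonnegative (indeed bounded below by $c\int_0^t\norm{w}_{r+1}^{r+1}$), so it may simply be discarded — the absorption term \emph{helps} and causes no difficulty. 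This is the one genuinely CBF-specific point, and it is where Lemma~\ref{monotone} earns its keep; everything else mirrors the Navier--Stokes template in \cite{RRSbook}.

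It then remains to absorb the convective remainder. I would estimate $\abs{\dual{(w\cdot\nabla)w}{u}} = \abs{\dual{(w\cdot\nabla)u}{w}} \le \norm{w}_{L^2}^{1/2}\norm{\nabla w}_{L^2}^{3/2}\norm{u}_{L^3}$ (via Hölder with the $3$D Ladyzhenskaya/Gagliardo--Nirenberg inequality $\norm{w}_{L^4}\le c\norm{w}_{L^2}^{1/4}\norm{\nabla w}_{L^2}^{3/4}$), then apply Young's inequality to split off $\tfrac12\norm{\nabla w}_{L^2}^2$, which is absorbed by the viscous term on the left, leaving a contribution $c\int_0^t\norm{u(s)}_{L^3}^4\,\norm{w(s)}_{L^2}^2\,\mathrm ds$. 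Since $u$ is a strong solution, $u \in L^\infty(0,T;V^1)\hookrightarrow L^\infty(0,T;L^6)\subset L^\infty(0,T;L^3)$, so $s\mapsto\norm{u(s)}_{L^3}^4$ is in $L^1(0,T)$ (in fact $L^\infty$). Gronwall's inequality applied to $y(t):=\norm{w(t)}_{L^2}^2$ with $y(0)=0$ then forces $y\equiv 0$, i.e. $u\equiv v$ on $[0,T]$. The main obstacle is not any single estimate but the bookkeeping needed to justify steps (i)--(iii) rigorously and to verify that the three identities combine with the right signs so that the absorption and convective terms end up on the favourable side; the regularity hypotheses of Lemmas~\ref{stronglemazero}--\ref{stronglemmawo} (valid precisely for $r\in[1,3]$) are exactly what make this legitimate.
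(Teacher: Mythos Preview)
Your argument is correct and mirrors the paper's proof almost exactly: assemble the energy equality for $u$, the energy inequality for $v$, and the two cross-identities (Lemmas~\ref{stronglemaone} and~\ref{stronglemmawo}), reduce the convective remainder to $\dual{(w\cdot\nabla)w}{u}$, discard the absorption contribution via Lemma~\ref{monotone}, and close with Gronwall. The only difference is cosmetic --- the paper estimates the trilinear term by $\norm{u}_{L^\infty}\norm{w}\norm{\nabla w}$ and invokes $H^2\hookrightarrow L^\infty$ together with $u\in L^2(0,T;V^2)$, whereas you use a Ladyzhenskaya-type interpolation needing only $u\in L^\infty(0,T;V^1)$; note that your displayed bound has a small slip (the $L^4$--$L^4$--$L^2$ H\"older route on $\dual{(w\cdot\nabla)u}{w}$ yields $c\norm{w}^{1/2}\norm{\nabla w}^{3/2}\norm{\nabla u}$ rather than a factor $\norm{u}_{L^3}$), but the corrected version feeds into Young and Gronwall just the same.
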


\begin{proof}
From Lemmas $\ref{stronglemaone}$ and $\ref{stronglemmawo}$, we have for all $t \in \lra{0, T}$
\begin{align} \nonumber
\calkawoddoz{0}{t}{\dual{\partial_tu}{v}} &+ \calkawoddoz{0}{t}{\dual{\nabla u}{\nabla v}} + \calkawoddoz{0}{t}{\dual{(u \cdot \nabla)u}{v}} + \calkawoddoz{0}{t}{\dual{\abs{u}^{r - 1}u}{v}} = 0,
\\ \nonumber
-\calkawoddoz{0}{t}{\dual{v}{\partial_t u}} &+ \calkawoddoz{0}{t}{\dual{\nabla v}{\nabla u}} + \calkawoddoz{0}{t}{\dual{(v \cdot \nabla)v}{u}} + \calkawoddoz{0}{t}{\dual{\abs{v}^{r - 1}v}{u}} \\ \nonumber
&= \dual{v(0)}{u(0)} - \dual{v(t)}{u(t)}.
\end{align}
Adding the above equations, we obtain
\begin{align} \nonumber
2\calkawoddoz{0}{t}{\dual{\nabla u}{\nabla v}}+ \calkawoddoz{0}{t}{\dual{(u \cdot \nabla)u}{v}} + \calkawoddoz{0}{t}{\dual{(v \cdot \nabla)v}{u}} \\ \label{addedeqns}
+ \calkawoddoz{0}{t}{\dual{\abs{u}^{r - 1}u}{v}} + \calkawoddoz{0}{t}{\dual{\abs{v}^{r - 1}v}{u}} = \norm{u(0)}^2 - \dual{v(t)}{u(t)}.
\end{align}

Our goal now is to obtain an integral inequality for the difference of the solutions $w := v - u$. To this end, we use the following standard indentity
\begin{equation} \nonumber
\norm{a - b}^2 = \norm{a}^2 + \norm{b}^2 - 2\dual{a}{b}
\end{equation}
to deal with the linear terms. We get
\begin{align} \nonumber
2\dual{\nabla u}{\nabla v} &= \norm{\nabla u}^2 + \norm{\nabla v}^2 - \norm{\nabla w}^2,\\ \nonumber
\dual{v(t)}{u(t)} &= \frac{1}{2}\norm{u(t)}^2 + \frac{1}{2}\norm{v(t)}^2 - \frac{1}{2}\norm{w(t)}^2.
\end{align}
We use the relation $v = w + u$, and by standard properties of the convective term, we obtain
\begin{align} \nonumber
\dual{(u \cdot \nabla)u}{v} + \dual{(v \cdot \nabla)v}{u} = \dual{(w \cdot \nabla)w}{u}.
\end{align}
We use two different substitutions to deal with the absorption terms
\begin{align} \nonumber
\dual{\abs{u}^{r - 1}u}{v} + \dual{\abs{v}^{r - 1}v}{u} &= \dual{\abs{u}^{r - 1}u}{w + u} + \dual{\abs{v}^{r - 1}v}{v - w} \\ \nonumber
&= \norm{u}^{r + 1}_{L^{r + 1}} + \norm{v}^{r + 1}_{L^{r + 1}} - \dual{\abs{u}^{r - 1}u - \abs{v}^{r - 1}v}{w}.
\end{align}

Hence, it follows from $\lr{\ref{addedeqns}}$ that
\begin{align} \nonumber
-\calkawoddoz{0}{t}{\norm{\nabla w}^2} + \calkawoddoz{0}{t}{\norm{\nabla u}^2} + \calkawoddoz{0}{t}{\norm{\nabla v}^2} + \calkawoddoz{0}{t}{\dual{(w \cdot \nabla)w}{u}} \\ \nonumber
+ \calkawoddoz{0}{t}{\norm{u}^{r + 1}_{L^{r + 1}}} + \calkawoddoz{0}{t}{\norm{v}^{r + 1}_{L^{r + 1}}} - \calkawoddoz{0}{t}{\dual{\abs{u}^{r - 1}u - \abs{v}^{r - 1}v}{w}} \\ \nonumber
= \frac{1}{2}\norm{u(0)}^2 + \frac{1}{2}\norm{v(0)}^2 + \frac{1}{2}\norm{w(t)}^2 - \frac{1}{2}\norm{u(t)}^2 - \frac{1}{2}\norm{v(t)}^2.
\end{align}
Rearranging the terms in the above, we obtain the following equation for the~difference $w$
\begin{align} \label{weakstrongfinal}
\frac{1}{2}\norm{w(t)}^2 &+ \calkawoddoz{0}{t}{\norm{\nabla w}^2} - \calkawoddoz{0}{t}{\dual{(w \cdot \nabla)w}{u}} + I_1 = I_2 + I_3,
\end{align}
where
\begin{align} \nonumber
I_1 &:= \calkawoddoz{0}{t}{\dual{\abs{u}^{r - 1}u - \abs{v}^{r - 1}v}{u - v}} \geq 0, \\ \nonumber
I_2 &:= \frac{1}{2}\norm{u(t)}^2 + \calkawoddoz{0}{t}{\norm{\nabla u}^2} + \calkawoddoz{0}{t}{\norm{u}^{r + 1}_{L^{r + 1}}} - \frac{1}{2}\norm{u(0)}^2 = 0, \\ \nonumber
I_3 &:= \frac{1}{2}\norm{v(t)}^2 + \calkawoddoz{0}{t}{\norm{\nabla v}^2} + \calkawoddoz{0}{t}{\norm{v}^{r + 1}_{L^{r + 1}}} - \frac{1}{2}\norm{v(0)}^2 \leq 0.
\end{align}
We employed here the Energy Equality for the strong solution\footnote{The fact that strong solutions satisfy the Energy Equality is a simple consequence of Lemma \ref{r:stronglemaone}.} $u$ and the Energy Inequality for the weak solution $v$, and also the monotonicity of the absorption term (Lemma \ref{monotone}).

Therefore, we can estimate $\lr{\ref{weakstrongfinal}}$ in the following way
\begin{align} \nonumber
\frac{1}{2}\norm{w(t)}^2 + \calkawoddoz{0}{t}{\norm{\nabla w}^2} &\leq \abs{\calkawoddoz{0}{t}{\dual{(w \cdot \nabla)w}{u}}} \leq \calkawoddoz{0}{t}{\norm{u}_{L^{\infty}}\norm{w}\norm{\nabla w}} \\ \nonumber
&\leq \frac{1}{2}\calkawoddoz{0}{t}{\norm{u}_{L^{\infty}}^2\norm{w}^2} + \frac{1}{2}\calkawoddoz{0}{t}{\norm{\nabla w}^2} \\ \nonumber
&\leq c\calkawoddoz{0}{t}{\norm{u}_{H^2}^2\norm{w}^2} + \frac{1}{2}\calkawoddoz{0}{t}{\norm{\nabla w}^2};
\end{align}
we used the $3$D embedding $H^2 \hookrightarrow L^{\infty}$ in the last line. Then, we have
\begin{align} \nonumber
\norm{w(t)}^2 + \calkawoddoz{0}{t}{\norm{\nabla w}^2} \leq c\calkawoddoz{0}{t}{\norm{u}_{H^2}^2\norm{w}^2},
\end{align}
and consequently
\begin{align} \nonumber
\norm{w(t)}^2 \leq c\calkawoddoz{0}{t}{\norm{u(s)}_{H^2}^2\norm{w(s)}^2}.
\end{align}
Since $u$ is a strong solution
$$ \calkawoddoz{0}{t}{\norm{u(s)}_{H^2}^2} < \infty \quad \mbox{for all} \quad t \in \lra{0, T}, $$
so application of the integral version of the Gronwall Lemma yields that $w(t) = 0$ for all $t \in [0, T]$.
\end{proof}

As a straightforward corollary of Theorem \ref{weakstrong}, we can deduce a weaker result: uniqueness of strong solutions in the class of strong solutions.

\begin{corollary} \label{cbfsilnejednozn}
Let $u$ and $v$ be two strong solutions of the convective Brinkman--Forchheimer eqautions $\lr{\ref{cbfralphazero}}$ with $r \geq 0$ on the time interval $[0, T]$, starting from the same initial condition $u_0 \in V^1$. Then $u \equiv v$ for all times $t \le T$.
\end{corollary}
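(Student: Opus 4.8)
The plan is to reduce to a Gronwall argument for the difference $w := u - v$, which is essentially the core computation already used in the proof of Theorem~\ref{weakstrong} but in a simpler setting. In fact, for $r \in [1, 3]$ the corollary is an \emph{immediate} consequence of Theorem~\ref{weakstrong}: a strong solution $v$ is in particular a weak solution satisfying the Energy Equality (by the footnote after Lemma~\ref{stronglemmawo}), hence the Energy Inequality, so Theorem~\ref{weakstrong} applied with the given strong $u$ and weak $v$ yields $u \equiv v$. To cover the full stated range $r \ge 0$ one simply repeats the estimate directly, as sketched below.

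First I would write both solutions in functional form, $\partial_t u + Au + B(u) + C_r(u) = 0$ and likewise for $v$, where $B(z) := \mathbb{P}(z \cdot \nabla)z$, subtract the two equations, and set $w := u - v$, so that
\begin{equation} \nonumber
\partial_t w + Aw + \lra{B(u) - B(v)} + \lra{C_r(u) - C_r(v)} = 0 .
\end{equation}
Taking the $L^2$ inner product with $w$ gives
\begin{equation} \nonumber
\frac{1}{2}\frac{\mathrm{d}}{\mathrm{d}t}\norm{w}^2 + \norm{\nabla w}^2 + \dual{B(u) - B(v)}{w} + \dual{C_r(u) - C_r(v)}{w} = 0 .
\end{equation}
The absorption term has a favourable sign: by monotonicity (Lemma~\ref{monotone} for $r \ge 1$, the direct Young's-inequality computation noted after it for $r > 0$, and the elementary pointwise inequality for $a/\abs{a} - b/\abs{b}$ when $r = 0$) we have $\dual{C_r(u) - C_r(v)}{w} \ge 0$, so this term may simply be dropped. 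For the convective term, using $\nabla \cdot u = \nabla \cdot v = 0$, the cancellation $\dual{(a \cdot \nabla)b}{b} = 0$ for divergence-free $a$, and $v = u + w$, one obtains $\dual{B(u) - B(v)}{w} = \dual{(w \cdot \nabla)u}{w} = -\dual{(w \cdot \nabla)w}{u}$, which is estimated exactly as in the proof of Theorem~\ref{weakstrong}:
\begin{equation} \nonumber
\abs{\dual{(w \cdot \nabla)w}{u}} \le \norm{u}_{L^{\infty}}\norm{w}\norm{\nabla w} \le \frac{1}{2}\norm{\nabla w}^2 + c\norm{u}_{H^2}^2\norm{w}^2 ,
\end{equation}
using Young's inequality and the $3$D embedding $H^2 \hookrightarrow L^{\infty}$.

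Combining these, one is left with $\frac{\mathrm{d}}{\mathrm{d}t}\norm{w}^2 + \norm{\nabla w}^2 \le c\norm{u(t)}_{H^2}^2\norm{w}^2$, hence in particular $\frac{\mathrm{d}}{\mathrm{d}t}\norm{w}^2 \le c\norm{u(t)}_{H^2}^2\norm{w}^2$. Since $u$ is a strong solution, $\int_0^T \norm{u(s)}_{H^2}^2 \, \mathrm{d}s < \infty$, and $w(0) = u_0 - u_0 = 0$, so the integral form of Gronwall's Lemma forces $\norm{w(t)}^2 = 0$ for every $t \in [0, T]$, i.e. $u \equiv v$.

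The only point that genuinely needs care is the justification that the above energy manipulation is rigorous rather than merely formal. This is fine because both $u$ and $v$ have the regularity of strong solutions, so $w \in L^2(0, T; V^2)$ with $\partial_t w \in L^2(0, T; L^2)$; consequently $w \in C([0, T]; V^1)$, the map $t \mapsto \norm{w(t)}^2$ is absolutely continuous with derivative $2\dual{\partial_t w}{w}$, and each nonlinear term pairs against $w$ as an integrable function of time (for the absorption term this uses that strong solutions lie in $L^{r+1}(0, T; L^{r+1}_\sigma)$, cf. Lemma~\ref{stronglemazero}). Everything else parallels the Navier--Stokes case and is routine.
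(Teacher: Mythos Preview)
Your proof is correct and follows essentially the same approach as the paper's own argument: first observe that the case $r \in [1,3]$ is immediate from Theorem~\ref{weakstrong}, then for the full range $r \ge 0$ subtract the equations for $u$ and $v$, test with $w = u - v$, drop the absorption contribution $\dual{C_r(u) - C_r(v)}{w}$ by monotonicity, and reduce to the Navier--Stokes Gronwall estimate. The only cosmetic differences are that the paper phrases the testing step via Lemma~\ref{stronglemmawo} (using $w$ as an admissible test function in the weak formulation) whereas you justify it directly from the strong-solution regularity, and that you spell out the convective-term estimate and Gronwall conclusion which the paper leaves as ``proceed as in the Navier--Stokes case''.
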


This result follows from Theorem \ref{weakstrong} only for the absorption exponents in the range $r \in \lra{1, 3}$; because strong solutions are by definition weak solutions with additional regularity and they satisfy the Energy Equality, it suffices to apply Theorem \ref{weakstrong} to the strong solutions $u$ and $v$. However, one can prove Corollary \ref{cbfsilnejednozn} independently for all exponents $r \geq 0$, following the proof for the analogous result for the Navier--Stokes equations; the only additional difficulty is in dealing with an extra nonlinear term $C_r(u)$. In this particular case, we can eliminate the additional nonlinearity from the proof due to its properties. We provide a short sketch of this fact below.

\begin{proof}
We set $w := u - v$. Then, of course $w(0) = 0$. We subtract weak formulations of the~CBF equations for the functions $u$ and $v$ and obtain equation for the difference
\begin{align} \nonumber
-\calkawoddoz{0}{t}{\dual{w}{\partial_t\varphi}} + \calkawoddoz{0}{t}{\dual{\nabla w}{\nabla\varphi}} &+ \calkawoddoz{0}{t}{\dual{B(u) - B(v)}{\varphi}} \\ \nonumber
&+ \calkawoddoz{0}{t}{\dual{C_r(u) - C_r(v)}{\varphi}} = -\dual{w(t)}{\varphi(t)}.
\end{align}
Using Lemma \ref{stronglemmawo}, we take as a test function $\varphi := w$ and get
\begin{align} \nonumber
\frac{1}{2}\norm{w(t)}^2 + \calkawoddoz{0}{t}{\norm{\nabla w}^2} &+ {\calkawoddoz{0}{t}{\dual{C_r(u) - C_r(v)}{w}}} \\ \label{uniqueineqcbf}
&\leq \abs{\calkawoddoz{0}{t}{\dual{B(u) - B(v)}{w}}}.
\end{align}

First, we deal with the nonlinearities connected with the operators $C_r$. We note that we can simply drop this term on the lef-hand side of $(\ref{uniqueineqcbf})$. Indeed, by monotonicity (Lemma \ref{monotone}), we have
\begin{align} \nonumber
\dual{C_r(u) - C_r(v)}{w} = \dual{\abs{u}^{r - 1}u - \abs{v}^{r - 1}v}{u - v} \geq 0 \quad \mbox{for} \quad r \geq 0.
\end{align}
Therefore, we can proceed as in the Navier--Stokes case to finish the proof of uniqueness of strong solutions.
\end{proof}

\section{Robustness of regularity} \label{s:robustness}

In this section, we deal with the so-called `\emph{robustness of regularity}' result for the~solutions of the convective Brinkman--Forchheimer equations on a torus $\mathbb{T}^3$. It generalises the result obtained in \cite{DashtiRobinson} for the Navier--Stokes equations.

We take $u_0$, $v_0 \in V^1$ and fix $T > T' > 0$. Let $u$ be a strong solution of the CBF equations on the time interval $[0, T]$ with external forces $f$ and initial condition $u_0$. Similarly, let $v$ be a strong solution of the CBF equations on $[0, T']$ with external forces $g$ and initial condition $v_0$. We will give an explicit condition, depending only on the data and on the function $u$, which allows us to extend (due to uniqueness) the function $v$ to a strong solution on the time interval $[0, T]$.

We consider the following system of equations
\begin{equation} \nonumber
\left\{\begin{split}
&\partial_tu + Au + B(u) + C_r(u) = f, \quad
&u(0, x) = u_0, \\
&\partial_tv + Av + B(v) + C_r(v) = g, \quad
&v(0, x) = v_0.
\end{split}\right.
\end{equation}
As before, we denote the difference of solutions by $w := u - v$. Subtracting the above equations we obtain the equation for $w$
\begin{equation} \label{differencecbf}
\partial_tw + Aw + B(u) - B(v) + C_r(u) - C_r(v) = f - g,
\end{equation}
with the initial condition
\begin{equation} \nonumber
w(0, x) = u_0 - v_0.
\end{equation}

\subsection{Technical ODE lemma}

In the proof of the robustness of regularity for the~above equations, the following simple ODE lemma will be extremely useful. It will allow us to estimate the time of existence for solutions of certain differential inequalities in terms of coefficients of a corresponding differential equation.

\begin{lemma} \label{l:odes}
Let $T > 0$, $a > 0$ and $n \in \mathbb{N}$ $(n > 1)$. Let $\delta(t)$ be a nonnegative, continuous function on the interval $[0, T]$. Let also $y$ be a nonnegative function, satisfying the following differential inequality
\begin{equation} \nonumber
\left\{\begin{split}
&\dot{y} \leq a y^n + \delta(t), \\
&y(0) = y_0 \geq 0.
\end{split}\right.
\end{equation}
We define the quantity
$$ \eta := y_0 + \int_0^T{\delta(t) \, \mathrm{d}t}. $$
If the following condition is satisfied
$$ \eta < \frac{1}{\left[(n - 1)a T\right]^{{1}/{(n - 1)}}}, $$
\begin{enumerate}
\item then $y(t)$ stays bounded on the interval $[0, T]$,

\item and \label{odes:jednostajnie} $y(t) \to 0$ as $\eta \to 0$, uniformly on $[0, T]$.
\end{enumerate}
\end{lemma}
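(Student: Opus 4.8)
The plan is to compare $y$ with the solution of the associated \emph{autonomous} differential equation and to absorb the forcing $\delta(t)$ into the initial datum. First I would introduce a shifted function that turns the inhomogeneous inequality into a homogeneous one: set
$$ z(t) := y(t) + \int_t^T{\delta(s)\, \mathrm{d}s}, $$
so that $z(0) = y_0 + \int_0^T \delta = \eta$, $z \ge y \ge 0$, and
$$ \dot z(t) = \dot y(t) - \delta(t) \le a y(t)^n \le a z(t)^n, $$
using $n > 1$ and $z \ge y \ge 0$. Thus $z$ satisfies the clean autonomous differential inequality $\dot z \le a z^n$ with $z(0) = \eta$.

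Next I would invoke the standard comparison principle: as long as it exists, $z(t)$ is bounded above by the solution $\zeta(t)$ of the ODE $\dot\zeta = a\zeta^n$, $\zeta(0) = \eta$, which is
$$ \zeta(t) = \frac{\eta}{\bigl(1 - (n-1)a\eta^{n-1}t\bigr)^{1/(n-1)}}. $$
This formula is valid on $[0, T]$ precisely when the denominator stays positive there, i.e. when $(n-1)a\eta^{n-1}T < 1$, which is exactly the hypothesis $\eta < \bigl[(n-1)aT\bigr]^{-1/(n-1)}$. Hence $z$, and therefore $y \le z$, stays bounded on $[0, T]$, giving part (1). (One should note the degenerate case $\eta = 0$: then $z \equiv 0$ works directly, or one argues $\zeta \equiv 0$.)

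For part (2), I would read off from the explicit bound that for every $t \in [0, T]$,
$$ 0 \le y(t) \le z(t) \le \zeta(t) \le \frac{\eta}{\bigl(1 - (n-1)a\eta^{n-1}T\bigr)^{1/(n-1)}}. $$
As $\eta \to 0$ the right-hand side tends to $0$ and the bound is uniform in $t \in [0, T]$, which is precisely statement (2).

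I do not expect a serious obstacle here; the only points requiring a little care are: justifying the comparison inequality $z \le \zeta$ rigorously (a routine Gronwall-type / maximal-solution argument, using continuity of $\delta$ and hence of $z$), handling the boundary case $\eta = 0$ separately, and making sure the algebra identifying the blow-up time of $\zeta$ with the stated smallness threshold is done correctly. The mild subtlety worth flagging is that $z$ is only known to be absolutely continuous (since $y$ is), so the differentiation of the integral term $\int_t^T\delta$ and the chain of inequalities should be understood in the a.e. sense, which is enough for the integral comparison argument.
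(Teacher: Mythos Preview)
Your argument is correct. The shift $z(t) := y(t) + \int_t^T \delta(s)\,\mathrm{d}s$ cleanly reduces the inhomogeneous inequality to the autonomous one $\dot z \le a z^n$ with $z(0) = \eta$, and the explicit comparison with $\zeta(t) = \eta\bigl(1 - (n-1)a\eta^{n-1}t\bigr)^{-1/(n-1)}$ gives both the boundedness and the uniform convergence to zero. The algebra identifying the blow-up threshold with the stated smallness condition is right, and your remarks about the degenerate case $\eta = 0$ and the a.e.\ interpretation of the derivative are appropriate caveats.

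Note, however, that the paper does not supply its own proof of this lemma: it simply refers the reader to Constantin, Dashti--Robinson, and the book of Robinson--Rodrigo--Sadowski. Your proof is in fact the standard one found in those references (the same shift-and-compare argument), so there is nothing to contrast; you have reproduced the intended proof.
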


For the proof of Lemma \ref{l:odes} see e.g.\ \cite{Constantin}, \cite{DashtiRobinson} or \cite{RRSbook}.

\subsection{A priori estimates}

Taking into account that $u$ is a strong solution on the~time interval $[0, T]$ and that we want to say the same about $v$, we need to change the~form of equation $\lr{\ref{differencecbf}}$ to eliminate the unknown function $v$. From the definition, we have $v = u - w$, so due to bilinearity of the form $B$, we have the identity
\begin{align} \nonumber
B(u) - B(v) &= B(u) - B(u - w) = B(u, u) - B(u - w, u- w) \\ \nonumber
&= B(u, u) - B(u, u) - B(w, w) + B(u, w) + B(w, u) \\ \nonumber
&= B(u, w) + B(w, u) - B(w, w).
\end{align}

Multiplying now both sides of $\lr{\ref{differencecbf}}$ by $Aw$ (we assume here that the function $w$ has sufficient regularity to justify these operations) and integrating over the spatial domain, we get
\begin{align} \nonumber
\dual{\partial_tw}{Aw} &+ \dual{Aw}{Aw} + \dual{B(u) - B(v)}{Aw} + \dual{C_r(u) - C_r(v)}{Aw} \\ \nonumber
&= \dual{f - g}{Aw}.
\end{align}

Integrating by parts, we obtain
\begin{align} \nonumber
\frac{1}{2}\frac{\mathrm{d}}{\mathrm{d}t}\norm{\nabla w}^2 + \norm{Aw}^2 &\leq \abs{\dual{f - g}{Aw}} + \abs{\dual{B(u, w) + B(w, u) - B(w, w)}{Aw}} \\ \label{nierjedenns}
&+ \abs{\dual{C_r(u) - C_r(v)}{Aw}}.
\end{align}

We will now estimate all the terms on the right-hand side of the inequality $\lr{\ref{nierjedenns}}$.
Using standard estimates for the bilinear form $B$ (cf.\ \cite{ConstantinFoias} or \cite{DashtiRobinson}) and Lemma $\ref{crucialestimate}$, we can estimate all the terms coming from the Navier--Stokes equations (see also Chapter $9.1$ in \cite{RRSbook}).

We have
\begin{enumerate}[{a) }]
\item
\begin{equation} \nonumber
\abs{\dual{f - g}{Aw}} \leq \dual{\abs{f - g}}{\abs{Aw}} \leq \norm{f - g}\norm{Aw} \leq c\norm{f - g}^2 + \frac{1}{16}\norm{Aw}^{2},
\end{equation}

\item
\begin{align} \nonumber
\abs{\dual{B(u, w)}{Aw}} &\leq \dual{\abs{u}\abs{\nabla w}}{\abs{Aw}} \leq \norm{u}_{L^6}\norm{\nabla w}_{L^3}\norm{Aw} \\ \nonumber
&\leq c\norm{u}_{H^1}\norm{\nabla w}^{{1}/{2}}\norm{\nabla w}_{L^6}^{{1}/{2}}\norm{Aw} \leq c\norm{u}_{H^1}\norm{w}_{H^1}^{{1}/{2}}\norm{Aw}^{{3}/{2}} \\ \nonumber
&\leq c\norm{u}_{H^1}^{4}\norm{w}_{H^1}^{2} + \frac{1}{16}\norm{Aw}^{2},
\end{align}

\item
\begin{align} \nonumber
\abs{\dual{B(w, u)}{Aw}} &\leq \dual{\abs{w}\abs{\nabla u}}{\abs{Aw}} \leq \norm{w}_{L^6}\norm{\nabla u}_{L^3}\norm{Aw} \\ \nonumber
&\leq c\norm{w}_{H^1}\norm{\nabla u}^{{1}/{2}}\norm{\nabla u}_{L^6}^{{1}/{2}}\norm{Aw} \leq c\norm{w}_{H^1}\norm{\nabla u}^{{1}/{2}}\norm{Au}^{{1}/{2}}\norm{Aw} \\ \nonumber
&\leq c\norm{w}_{H^1}^{2}\norm{\nabla u}\norm{Au} + \frac{1}{16}\norm{Aw}^{2},
\end{align}

\item
\begin{align} \nonumber
\abs{\dual{- B(w, w)}{Aw}} &\leq \dual{\abs{w}\abs{\nabla w}}{\abs{Aw}} \leq \norm{w}_{L^6}\norm{\nabla w}_{L^3}\norm{Aw} \\ \nonumber
&\leq c\norm{w}_{H^1}\norm{\nabla w}^{{1}/{2}}\norm{\nabla w}_{L^6}^{{1}/{2}}\norm{Aw} \leq c\norm{w}_{H^1}^{{3}/{2}}\norm{Aw}^{{3}/{2}} \\ \nonumber
&\leq c\norm{w}_{H^1}^{6} + \frac{1}{16}\norm{Aw}^{2}.
\end{align}
\end{enumerate}

Summing parts $\mathrm{a)}$ to $\mathrm{d)}$, we get
\begin{align} \nonumber
\abs{\dual{f - g}{Aw}} &+ \abs{\dual{B(u, w) + B(w, u) - B(w, w)}{Aw}} \leq c\norm{f - g}^2 \\ \label{szacjeden}
&+ c\lr{\norm{u}_{H^1}^{4} + \norm{\nabla u}\norm{Au}}\norm{w}_{H^1}^{2} + c\norm{w}_{H^1}^{6} + \frac{1}{4}\norm{Aw}^{2}.
\end{align}

Note that we obtain the full $H^1$-norm of the difference $w$ on the right-hand side of $(\ref{szacjeden})$ and there is only $L^2$-norm of the gradient on the left-hand side of $(\ref{nierjedenns})$. To circumvent that problem we can consider the energy equality for the difference
\begin{align} \nonumber
\frac{1}{2}\frac{\mathrm{d}}{\mathrm{d}t}\norm{w}^2 + \norm{\nabla w}^2 + \dual{B(u) - B(v)}{w} &+ {\dual{C_r(u) - C_r(v)}{w}} \\ \label{energydifference}
&= {\dual{f - g}{w}}.
\end{align}

We note again that ${\dual{C_r(u) - C_r(v)}{w}} \ge 0$. Substituting in $(\ref{energydifference})$ $v = u - w$, we get
\begin{align} \nonumber
\dual{B(u) - B(v)}{w} &= \dual{B(u, u) - B(u - w, u - w)}{w} = \dual{B(u, u)}{w} - \dual{B(u, u)}{w} \\ \nonumber
&+\dual{B(u, w)}{w} + \dual{B(w, u)}{w} - \dual{B(w, w)}{w} \\ \nonumber
&= \dual{B(w, u)}{w}.
\end{align}
We used here the fact that
$$ \dual{(u \cdot \nabla)v}{w} = -\dual{(u \cdot \nabla)w}{v} $$
for $u \in V^1$ and $v, w \in H^1$.

Therefore, we obtain from the energy equality $(\ref{energydifference})$
\begin{align} \nonumber
\frac{1}{2}\frac{\mathrm{d}}{\mathrm{d}t}\norm{w}^2 + \norm{\nabla w}^2 \le \abs{{\dual{f - g}{w}}} + \abs{\dual{B(w, u)}{w}}.
\end{align}
Estimating the nonlinear term gives
\begin{align} \nonumber
\abs{\dual{B(w, u)}{w}} &\leq \dual{\abs{w}\abs{\nabla u}}{\abs{w}} \leq \norm{w}^2_{L^4}\norm{\nabla u} \le \norm{w}^{1/2}\norm{w}^{3/2}_{L^6}\norm{\nabla u} \\ \nonumber
&\leq c\norm{w}^2_{H^1}\norm{\nabla u},
\end{align}
from which we conclude
\begin{align} \label{szacextra}
\frac{1}{2}\frac{\mathrm{d}}{\mathrm{d}t}\norm{w}^2 + \norm{\nabla w}^2 &\le c\norm{f - g}^2 + c\norm{w}^2_{H^1}\lr{\norm{\nabla u} + 1}.
\end{align}

To estimate the additional nonlinear terms in $(\ref{nierjedenns})$ connected with the operator $C_r$ we use Lemma \ref{cbfdifference}
$$ \abs{C_r(u) - C_r(v)} \leq \lr{2^{r - 2}r}\lr{\abs{u}^{r - 1}\abs{w} + \abs{w}^{r}} \quad \mbox{for} \quad r \geq 1, $$
which gives
\begin{equation}\label{dodajzero}
\abs{\dual{C_r(u) - C_r(v)}{Aw}} \leq \lr{2^{r - 2}r}\lra{\dual{\abs{u}^{r - 1}\abs{w}}{\abs{Aw}} + \dual{\abs{w}^r}{\abs{Aw}}}.
\end{equation}

We can estimate the first term in $\lr{\ref{dodajzero}}$ using H\"older's inequality with three exponents ${6}/({r - 1}), {6}/({4 - r}), 2$ and Sobolev's embedding $H^1 \hookrightarrow L^6$
\begin{align} \nonumber
\dual{\abs{u}^{r - 1}\abs{w}}{\abs{Aw}} &\leq \norm{u}_{L^{6}}^{r - 1}\norm{w}_{L^{{6}/({4 - r})}}\norm{Aw} \\ \nonumber
&\leq c\norm{u}_{H^1}^{r - 1}\norm{w}_{H^1}\norm{Aw}  \\ \label{cbfszaca}
&\leq c\norm{u}_{H^1}^{2\lr{r - 1}}\norm{w}_{H^1}^2 + \frac{1}{8}\norm{Aw}^2.
\end{align}

Using the same bound for $L^{2r}$-norm as in $(\ref{tworestimate})$, we estimate the second term on the right-hand side of $\lr{\ref{dodajzero}}$
\begin{align} \nonumber
\dual{\abs{w}^r}{\abs{Aw}} &\leq \norm{w}_{L^{2r}}^{r}\norm{Aw} \leq c\norm{w}_{L^{2r}}^{2r} + \frac{1}{8}\norm{Aw}^2 \\ \label{cbfszacb}
&\leq c\norm{w}_{H^1}^{2r} + \frac{1}{8}\norm{Aw}^2.
\end{align}

Combining the inequalities $\lr{\ref{cbfszaca}}$-$\lr{\ref{cbfszacb}}$ yields
\begin{align} \label{nieliniowosccbfdwa}
\abs{\dual{C_r(u) - C_r(v)}{Aw}} &\leq c\norm{u}_{H^1}^{2\lr{r - 1}}\norm{w}_{H^1}^2 + c\norm{w}_{H^1}^{2r} + \frac{1}{4}\norm{Aw}^2.
\end{align}
We apply $(\ref{szacjeden})$ and $(\ref{nieliniowosccbfdwa})$ in $(\ref{nierjedenns})$ and obtain
\begin{align} \nonumber
\frac{\mathrm{d}}{\mathrm{d}t}\norm{\nabla w}^2 + \norm{Aw}^2 &\leq c_0\norm{f - g}^2 + c_1\lr{\norm{u}_{H^1}^{4} + \norm{\nabla u}\norm{Au} + \norm{u}_{H^1}^{2\lr{r - 1}}}\norm{w}_{H^1}^{2} \\ \label{robinequalityA}
&+ c_2\norm{w}_{H^1}^{2r} + c_3\norm{w}_{H^1}^{6}.
\end{align}
Finally, we add together $(\ref{szacextra})$ and $\ref{robinequalityA}$
\begin{align} \nonumber
\frac{\mathrm{d}}{\mathrm{d}t}\norm{w}_{H^1}^2 + \norm{\nabla w}^2 &+ \norm{Aw}^2 \leq c_0\norm{f - g}^2 + c_2\norm{w}_{H^1}^{2r} + c_3\norm{w}_{H^1}^{6} \\ \label{robinequality}
&+c_1\lr{\norm{u}_{H^1}^{4} + \norm{\nabla u}\norm{Au} + \norm{u}_{H^1}^{2\lr{r - 1}} + \norm{\nabla u} + 1}\norm{w}_{H^1}^{2}.
\end{align}

\subsection{Robustness of regularity}

In this section, we prove the following theorem for the convective Brinkman--Forchheimer equations with $r \in [1, 3]$ on a periodic domain $\T^3$.
\begin{theorem} \label{twr:stabilitycbf}
Assume that $f, g \in L^2(0, T; H)$ and $u_0, v_0 \in V^1$. Furthermore, let $u \in L^{\infty}(0, T; V^1) \cap L^{2}(0, T; V^2)$ be the strong solution of the convective Brinkman--Forchheimer equations $\lr{\ref{cbfralphazero}}$ on the time interval $[0, T]$, with external forces $f$ and initial condition $u_0$. If
\begin{align} \label{stabilitycbf}
&\norm{u_0 - v_0}_{H^1}^2 + c_0\calkat{\norm{f(t) - g(t)}^2} < R(u),
\end{align}
where
$$ R(u) := c\frac{\exp\lr{-c_2T}}{\sqrt{T}}\exp{\lr{-c_1\calkat{\lr{\norm{u}_{H^1}^{4} + \norm{\nabla u}\norm{Au} + \norm{u}_{H^1}^{2\lr{r - 1}} + \norm{\nabla u}}}}}, $$
for some positive constants $c_0, c_1, c_2, c$, then the function $v$ solving the CBF equations $\lr{\ref{cbfralphazero}}$, with external forces $g$ and initial condition $v_0$, is also a strong solution on the time interval $[0, T]$ and have the same regularity as the function $u$.
\end{theorem}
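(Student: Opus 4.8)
The plan is to run a Gronwall-type argument on the energy inequality \lr{\ref{robinequality}} that has already been derived, and to use the technical ODE Lemma \ref{l:odes} to guarantee that the $H^1$-norm of the difference $w = u - v$ remains bounded on all of $[0, T]$. First I would set $y(t) := \norm{w(t)}_{H^1}^2$ and $\phi(t) := c_1\lr{\norm{u(t)}_{H^1}^{4} + \norm{\nabla u(t)}\norm{Au(t)} + \norm{u(t)}_{H^1}^{2\lr{r - 1}} + \norm{\nabla u(t)} + 1}$, which is an integrable function on $[0, T]$ precisely because $u$ is a strong solution (so $\norm{u}_{H^1}$ is bounded and $\norm{Au} \in L^2(0, T)$, whence $\norm{\nabla u}\norm{Au} \in L^1(0, T)$). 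Dropping the nonnegative terms $\norm{\nabla w}^2 + \norm{Aw}^2$ on the left of \lr{\ref{robinequality}}, we get
\begin{equation} \nonumber
\dot{y} \leq c_0\norm{f - g}^2 + \phi(t)\,y + c_2 y^r + c_3 y^3.
\end{equation}
Since $r \in [1, 3]$, on the set where $y \leq 1$ we have $y^r \leq y$ and on the set where $y \geq 1$ we have $y^r \leq y^3$, so $c_2 y^r \leq c_2(y + y^3)$; absorbing $c_2 y$ into the linear coefficient $\phi$ and $c_2 y^3$ into the cubic term, the inequality becomes $\dot{y} \leq \delta(t) + \tilde\phi(t)y + \tilde c_3 y^3$ with $\delta(t) = c_0\norm{f - g}^2$.

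Next I would remove the linear term by the standard integrating-factor substitution $z(t) := y(t)\exp\lr{-\int_0^t \tilde\phi(s)\,\mathrm{d}s}$. Writing $\Phi := \int_0^T \tilde\phi(s)\,\mathrm{d}s$ and noting $e^{-\Phi} \leq \exp\lr{-\int_0^t\tilde\phi} \leq 1$ for $t \in [0, T]$, a direct computation gives
\begin{equation} \nonumber
\dot{z} = \lr{\dot y - \tilde\phi y}e^{-\int_0^t\tilde\phi} \leq \lr{\delta(t) + \tilde c_3 y^3}e^{-\int_0^t\tilde\phi} \leq \delta(t) + \tilde c_3 e^{2\Phi}z^3,
\end{equation}
using $y^3 e^{-\int_0^t\tilde\phi} = z^3 e^{2\int_0^t\tilde\phi} \leq z^3 e^{2\Phi}$. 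This is now exactly of the form covered by Lemma \ref{l:odes} with $n = 3$, $a = \tilde c_3 e^{2\Phi}$, and forcing $\delta$; the relevant quantity is $\eta = z(0) + \int_0^T\delta = \norm{u_0 - v_0}_{H^1}^2 + c_0\int_0^T\norm{f - g}^2$, which is the left-hand side of \lr{\ref{stabilitycbf}}. Lemma \ref{l:odes} then tells us $z$, hence $y$, stays bounded on $[0, T]$ provided $\eta < \lr{2aT}^{-1/2} = \lr{2\tilde c_3 T}^{-1/2}e^{-\Phi}$, and unravelling $\Phi = \int_0^T\tilde\phi$ (which contains a $\int_0^T 1 = T$ contribution giving the $e^{-c_2 T}$ factor and the remaining $u$-dependent integral giving the exponential of $c_1\int_0^T(\cdots)$) reproduces exactly the threshold $R(u)$ in the statement.

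Finally I would convert the a priori bound into the actual existence-of-strong-solution conclusion. The point is that $v$ a priori exists only on a short interval $[0, T']$ as a strong solution (by Theorem \ref{localcbf}, applied with forcing, started from $v_0$), and on that interval the computation leading to \lr{\ref{robinequality}} is legitimate, so $\norm{w(t)}_{H^1}$ is controlled by the bound just obtained. Hence $\norm{v(t)}_{H^1} \leq \norm{u(t)}_{H^1} + \norm{w(t)}_{H^1}$ stays bounded by a constant independent of $T'$ as $t \to (T')^-$; a standard continuation argument (if the strong solution existed only up to a maximal time $T^* \leq T$, then $\norm{v(t)}_{H^1}$ would have to blow up at $T^*$, contradicting the bound) extends $v$ to a strong solution on all of $[0, T]$. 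Uniqueness from Theorem \ref{weakstrong} (or Corollary \ref{cbfsilnejednozn}) guarantees this extension is consistent, and integrating \lr{\ref{robinequality}} once more over $[0, T]$ with the now-known $L^\infty_t H^1$ bound on $w$ yields $\int_0^T\norm{Av}^2 < \infty$, i.e. $v$ has the full strong-solution regularity $L^\infty(0, T; V^1) \cap L^2(0, T; V^2)$ claimed. The main obstacle is the bookkeeping in Step 2: one must make the substitution and the various bounds $e^{-\int_0^t\tilde\phi} \leq 1$, $e^{2\int_0^t\tilde\phi}\leq e^{2\Phi}$ fit together so that the resulting $a$ and $\eta$ in Lemma \ref{l:odes} assemble into precisely the stated $R(u)$; the handling of the $y^r$ term for $r \in [1,3]$ (absorbing it into linear plus cubic) is the one genuinely $r$-dependent wrinkle, and it is mild.
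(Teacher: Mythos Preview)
Your proposal is correct and follows essentially the same route as the paper: derive the differential inequality from \lr{\ref{robinequality}}, absorb the $y^r$ term into linear plus cubic via $y^r \le y + y^3$ for $r \in [1,3]$, remove the linear term with the integrating-factor substitution, apply Lemma \ref{l:odes} with $n=3$, and finish with a blow-up/continuation argument followed by integration of \lr{\ref{robinequality}} to recover the $L^2(0,T;V^2)$ regularity. The only cosmetic difference is that the paper frames the continuation step as an explicit contradiction with a maximal existence time $\tilde{T} \le T$, whereas you phrase it as a standard continuation argument; the content is identical.
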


The proof of the above theorem is similar to the proof of an analogous result for the Navier--Stokes equations (see \cite{DashtiRobinson} for the details).
\begin{proof}
Local existence of strong solutions for the CBF equations $\lr{\mbox{Theorem \ref{localcbf}}}$ implies that there exists $\tilde{T} > 0$ such that $v \in L^{\infty}(0, T'; V^1) \cap L^{2}(0, T'; V^2)$ for every $T' < \tilde{T}$. We denote the maximal time of existence of the strong solution $v$ by $\tilde{T}$, i.e.\
\begin{equation} \nonumber
\limsup_{t \to \tilde{T}^{-}}{\norm{\nabla v(t)}} = \infty.
\end{equation}
This implies that $\norm{\nabla w(t)}$ also blows up as $t \to \tilde{T}^-$, where $w := u - v$. We assume that $\tilde{T} \le T$, where $T$ is the time of existence of the strong solution $u$, and lead to a contradiction.

The difference $w$ satisfies
\begin{equation} \label{governcbf}
\partial_tw + Aw + B(u, w) + B(w, u) - B(w, w) + C_r(u) - C_r(v) = f - g
\end{equation}
on the interval $(0, \tilde{T})$, with the initial condition $w(0, x) = u_0 - v_0$. We know that $\partial_tv \in L^{2}(0, T'; H)$ for every $T' < \tilde{T}$. Furthermore, we have $\tilde{T} \le T$, so obviously also $\partial_tu \in L^{2}(0, T'; H)$ for every $T' < \tilde{T}$. Then, taking the inner product of $\lr{\ref{governcbf}}$ with $Aw$ in $L^2$ and using our a priori estimate $(\ref{robinequality})$, we obtain
\begin{align} \nonumber
\frac{\mathrm{d}}{\mathrm{d}t}\norm{w}_{H^1}^2 + \norm{Aw}^2 &\leq c_0\norm{f - g}^2 + c_r\norm{w}_{H^1}^{2r} + c_3\norm{w}_{H^1}^6 \\ \label{nierdwacbf}
&+ c_1\norm{w}_{H^1}^{2}\lr{\norm{u}_{H^1}^{4} + \norm{\nabla u}\norm{Au} + \norm{u}_{H^1}^{2\lr{r - 1}} + \norm{\nabla u} + 1},
\end{align}
for appriopriate values of the constants $c_i$, $i \in \lrb{0, 1, r, 3}$.

We define the quantities
\begin{itemize}
\item $X(t) := \norm{w(t)}_{H^1}^2$,

\item $\delta(t) := c_0\norm{f(t) - g(t)}^2$,

\item $\tilde{\gamma}(t) := c_1\lr{\norm{u(t)}_{H^1}^{4} + \norm{\nabla u(t)}\norm{Au(t)} + \norm{u(t)}_{H^1}^{2\lr{r - 1}} + \norm{\nabla u(t)} + 1}$.
\end{itemize}
Inequality $\lr{\ref{nierdwacbf}}$ gives (omitting $\norm{Aw}^2$ on the left-hand side)
\begin{align} \nonumber
X' &\leq c_3X^3 + c_rX^r + \tilde{\gamma}(t)X + \delta(t).
\end{align}
Using an inequality (valid for $X \ge 0$)
$$ X^p \leq X^3 + X \quad \mbox{for} \quad p \in [1, 3], $$
and changing the constant $c_3$, we get
\begin{align} \label{rrzjedencbf}
X' &\leq c_3X^3 + \gamma(t)X + \delta(t),
\end{align}
where $\gamma(t) := \tilde{\gamma}(t) + c_r$.

We now take
$$ Y(t) := \exp\lr{-\int_{0}^{t} \gamma(s) \, \mathrm{d}s}X(t) $$
and multiply both sides of $\lr{\ref{rrzjedencbf}}$ by $\exp\lr{-\int_{0}^{t} \gamma(s) \, \mbox{ds}} \leq 1$. This way we obtain
\begin{align} \nonumber
Y' &\leq c_3\exp\lr{-\int_{0}^{t} \gamma(s) \, \mathrm{d}s}X^3 + \delta(t)\exp\lr{-\int_{0}^{t} \gamma(s) \, \mathrm{d}s} \\ \nonumber
&\leq c_3\left[\exp\lr{2\int_{0}^{t} \gamma(s) \, \mathrm{d}s}\right]Y^3 + \delta(t) \\ \nonumber
&\leq \underbrace{c_3\left[\exp\lr{2\int_{0}^{T} \gamma(s) \, \mathrm{d}s}\right]}_{=: K}Y^3 + \delta(t).
\end{align}

Hence, we have the differential inequality [valid on the time interval $(0, \tilde{T})$]
\begin{equation} \nonumber
Y' \leq KY^3 + \delta(t),
\end{equation}
with the initial condition
\begin{equation} \nonumber
Y(0) = \norm{u_0 - v_0}_{H^1}^{2}.
\end{equation}

Therefore, by Lemma \ref{l:odes} used for $n = 3$, the function ${Y}(t)$ is uniformly bounded on the time interval $[0, T']$ for every $T' < \tilde{T} \le T$, provided that
$$ Y(0) + \int_0^{T'}{\delta(t) \, \mathrm{d}t} < \frac{1}{\lr{2KT'}^{1/2}}, $$
which clearly holds (since $T' < T$) if we have
$$ Y(0) + \int_0^{T}{\delta(t) \, \mathrm{d}t} < \frac{1}{\lr{2KT}^{1/2}}. $$
Substituting all our original variables in the above condition, we obtain
\begin{align} \nonumber
&\norm{u_0 - v_0}_{H^1}^2 + c_0\calkat{\norm{f(t) - g(t)}^2} \\ \nonumber
&< \frac{\exp{\lr{-c_rT}}}{\sqrt{2c_3T}}\exp{\lr{-c_1\calkat{\lr{\norm{u}_{H^1}^{4} + \norm{\nabla u}\norm{Au} + \norm{u}_{H^1}^{2\lr{r - 1}} + \norm{\nabla u} + 1}}}},
\end{align}
which is (up to a change of constants) the robustness condition $(\ref{stabilitycbf})$. If this condition is satisfied, it follows that the function $X(t) = \norm{w(t)}^2_{H^1}$ is uniformly bounded on the time interval $[0, \tilde{T})$
$$ X(t) = Y(t)\exp\lr{\int_{0}^{t} \gamma(s) \, \mathrm{d}s} \le Y(t)\exp\lr{\int_{0}^{T} \gamma(s) \, \mathrm{d}s} \le C(T) < \infty. $$
Hence, we finally get that $ \norm{w({{t}})}_{H^1} \le C(T)$ for all $t <\tilde{T}$, and consequently
$\norm{v(t)}_{H^1} \le C(T)$ for $t \in [0, \tilde{T})$ as well. It follows that
$$ \limsup_{t \to \tilde{T}^{-}}{\norm{\nabla v(t)}} \le C(T) < \infty, $$
which contradicts the maximality of the time $\tilde{T}$. Therefore, $\tilde{T} > T$ and the function $\nabla v(t)$ does not blow up, at least on the time interval $[0, T]$. Hence, the function $v$ belongs to the space $L^{\infty}(0, T; V^1)$.

Now, directly from the inequality $\lr{\ref{nierdwacbf}}$, it follows that the function $v({t})$ belongs also to the space $L^2(0, T; V^2)$, which proves that it is a strong solution on the time interval $[0, T]$, completing the proof of Theorem $\ref{twr:stabilitycbf}$.
\end{proof}

It is worth mentioning that the robustness of regularity result proved here could also be obtained via the Implicit Function Theorem.
Indeed, let us consider the map
$$
F\colon V^1\times L^2(0,T;H)\times L^2(0,T;V^1)\to V^1\times L^2(0,T;H)
$$
defined by
$$ F(u_0,f;u) := \lr{u(0)-u_0, \ \ \partial_tu - \mu Au + B(u) + \beta C_r(u) - \PP f}. $$
To apply the Implicit Function Theorem one first has to check that this map is $C^1$-smooth in all variables. Then, if $\bar u$ is a strong solution, the condition `$D_uF(\bar u)$ is invertible' is equivalent to the unique solvability of the linear problem
$$ \partial_tv - \mu\Delta v + (v \cdot \nabla)\bar u + (\bar u \cdot \nabla)v + \beta r\abs{\bar{u}}^{r - 1}v = h(t), \ \ \nabla\cdot v = 0,\ v|_{t=0}=v_0 $$
for all $h\in L^2(0,T;H)$ and all $v_0\in V^1$. Since the solution $\bar u$ is strong, this can be shown using energy estimates very similar to the analysis presented above, and the Implicit Function Theorem then gives existence of strong solutions for $(u_0,f)$ close to their original values.

While this avoids some of the arguments we used, the advantage of our approach is that it yields an estimate of the `robustness radius', while the Implicit Function Theorem only gives robustness for `sufficiently small' perturbations.

\section{Conclusion} \label{s:conclusion}

Going back to Theorem $\ref{twr:stabilitycbf}$, it is natural to ask what kind of condition, if any, is required if we consider `robustness of regularity' with respect to the absorption exponent $r$. To focus our attention on the dependence on the exponents, let us take $u_0 \equiv v_0 \in V^1$ and $f \equiv g \in L^2(0, T; H)$, in such a way that $u$ is a strong solution on the time interval $[0, T]$ of the CBF equations with initial condition $u_0$ and the exponent $s$ (if $s > 3$ we know that it is in fact global-in-time strong solution), and let $v$ be a weak solution of the CBF equations with initial condition $v_0$ and the exponent $r \in [1, 3]$, where $r < s$. We know that $v$ is also a strong solution on some time interval $[0, \tilde{T}]$. We want to find a condition for exponents $r$ and $s$ depending only on the function $u$, which ensures that $v$ remains strong at least on the time interval $[0, T]$.

The only new obstacle in the problem described above lays in estimating the difference $C_s(u) - C_r(v)$. We observe that
\begin{align} \nonumber
\abs{\abs{u}^{s - 1}u - \abs{v}^{r - 1}v} &\leq \abs{\abs{u}^{s - 1}u - \abs{u}^{r - 1}u} + \abs{\abs{u}^{r - 1}u - \abs{v}^{r - 1}v} \\ \label{differenceexp}
&\leq \abs{u}^r\abs{\abs{u}^{s - r} - 1} + \abs{\abs{u}^{r - 1}u - \abs{v}^{r - 1}v}.
\end{align}
We have already seen how to deal with the second term on the right-hand side of $(\ref{differenceexp})$ [cf.\ $(\ref{dodajzero})$ and the following lines]. Therefore, using similar arguments to those in the proof of Theorem $\ref{twr:stabilitycbf}$, we obtain the robustness condition for the absorption exponents
\begin{align} \label{stabilityexponent}
c_0\calkat{ \lr{ \int_{\T^3}{ \abs{u}^{2r}\abs{ \abs{u}^{s - r} - 1 }^2 \, \mathrm{d}x } }^{1/2}} < R(u, r),
\end{align}
where $R(u, r)$ is equal to the constant $R(u)$ defined in Theorem $\ref{twr:stabilitycbf}$; this constant is finite because $u$ is the strong solution of the CBF equations with the absorption exponent $s$ and $s > r$.
On the other hand, the term on the left-hand side of $(\ref{stabilityexponent})$ tends to $0$ as $s - r \to 0^+$ (provided that the integral is bounded). Fixing $r = 3$ and letting $s \to r^+$ we can see from the condition $(\ref{stabilityexponent})$ how close we have to get with $s$ to the critical case $r =3$ in order to ensure that the weak solution $v$ is actually a strong solution on the time interval $[0, T]$.

In the works of Chernyshenko et al.\ \cite{CCRT} and Dashti $\&$ Robinson \cite{DashtiRobinson} the robustness of regularity for the Navier--Stokes equations was used to construct a numerical algorithm which can verify in a finite time regularity of a given strong solution. The second ingredient required in that construction is the convergence of the Galerkin approximations to the strong solution. As we showed in this article, the robustness of regularity can be extended to the convective Brinkman--Forchheimer equations with the absorption exponent $r \in [1, 3]$. Using similar methods as presented here to deal with the additional nonlinearity $\abs{u}^{r - 1}u$, it should be possible to prove also for the CBF equations that the Galerkin approximations of a strong solution converge strongly to that solution in appropriate function spaces. Consequently, it should be possible to construct a similar algorithm for the numerical verification of regularity for these equations as well.

\bibliography{biblio}
\bibliographystyle{acm}
\end{document}